\theoremstyle{definition}
\newtheorem{thm}{Theorem}[section]
\newtheorem{lem}[thm]{Lemma}
\newtheorem{prop}[thm]{Proposition}
\theoremstyle{definition}
\newtheorem{rem}[thm]{Remark}
\newtheorem{dfn}[thm]{Definition}
\numberwithin{equation}{section}
\begin{document}

\title[Elliptic curves with a specified subgroup and the trace formula]{Elliptic curves over a finite field with a specified subgroup and the trace formula}

\author{Tadahiro Katsuoka}
\address{Department of Mathematics, Faculty of Science, Kyoto University, Kyoto 606-8502, Japan}
\email{katsuoka.tadahiro.w30@kyoto-u.jp}

\date{February 25, 2025}

\subjclass[2020]{Primary: 11G20, 11F72; Secondary: 14G15, 14H52}

\keywords{Elliptic curves, Modular form, Eichler--Selberg trace formula}

\begin{abstract}
Ihara and Birch obtained a formula expressing the sum of powers of the traces of elliptic curves over a fixed finite field of characteristic $p$ in terms of the traces of Hecke operators for $\mathrm{SL}_2(\mathbb{Z})$. Generalizing the theorems of Ihara and Birch, for a finite abelian group $A$ whose order is coprime to $p$, Kaplan and Petrow gave a formula for statistical description of powers of the traces of elliptic curves which contain subgroups isomorphic to $A$. In this paper, we generalize the theorems of Ihara, Birch, and Kaplan--Petrow to the case where the order of $A$ is divisible by $p$.
\end{abstract}

\maketitle

\section{Introduction}\label{sec1}
The number of isomorphism classes of elliptic curves over a fixed finite field  is described by the class numbers of imaginary quadratic orders, and the sum of the class numbers also appears in the Eichler--Selberg trace formula.
Based on this, 
Ihara and Birch  gave a formula for the sum of powers of the traces of elliptic curves over a finite field in terms of the traces of Hecke operators for $\mathrm{SL}_2(\mathbb{Z})$ (see \cite[Equation 47]{Ihara:1967}, \cite[Equation 4]{Birch:1968}).

Let $p$ be a prime, and $q$ be a power of $p$. Let $\mathbb{F}_q$ denote a finite field with $q$ elements. 
Let $A$ be a finite abelian group. 
When the order of $A$ is coprime to $p$, 
Kaplan and Petrow \cite[Theorem 3]{Kaplan and Petrow:2017} gave a formula for statistical description of powers of the traces of elliptic curves $E$ over $\mathbb{F}_q$ such that there exists an injective homomorphism $A \hookrightarrow E(\mathbb{F}_q)$, in terms of the traces of Hecke operators for some congruence subgroups of $\mathrm{SL}_2(\mathbb{Z})$. In this paper, we generalize the theorems of Ihara, Birch, and Kaplan--Petrow to the case where the order of $A$ is divisible by $p$.

To state the main result of this paper, we introduce some notation.
Let $\mathcal{C}$ denote the set of $\mathbb{F}_q$--isomorphism classes of elliptic curves over $\mathbb{F}_q$. For an elliptic curve $E$ over $\mathbb{F}_q$, the $\mathbb{F}_q$--isomorphism class of $E$ is written by the same symbol $E$.
For an elliptic curve $E$ over $\mathbb{F}_q$, we define the trace of $E$ as $t_E :=q+1-\# E(\mathbb{F}_q)$.
By Hasse's theorem, we have $|t_E| \leq 2 \sqrt{q}$. For a finite abelian group $A$, we define a function $\Phi_A$ on $\mathcal{C}$ by 
\begin{equation*}
    \Phi_A(E) := 
    \begin{cases}
        1 & \text{if there exists an injective homomorphism  }A \hookrightarrow E(\mathbb{F}_q)\\
        0 & \text{otherwise.}
    \end{cases}
\end{equation*}
For an integer $j \geq 0$, we define the Chebyshev polynomial of the second kind $U_j(t)$ by 
    \begin{equation*}
        U_0(t) := 1, \quad
        U_1(t) := 2t, \quad
        U_{j}(t) := 2t U_{j-1}(t) - U_{j-2}(t) \quad \text{for } j \geq 2.
    \end{equation*}
For an integer $k \geq 2$, we define the normalized Chebyshev polynomial $U_{k-2}(t,q) \in \mathbb{Z} [q,t]$ by
    \begin{equation*}
        U_{k-2}(t,q) := q^{k/2-1}U_{k-2}\left( \frac{t}{2\sqrt{q}} \right).
    \end{equation*}
    We define the moment of elliptic curves whose group of $\mathbb{F}_q$--rational points contain $A$ by
\begin{equation*}
    \mathbb{E}_q (U_{k-2}(t_E,q)\Phi_A):= \frac{1}{q} \sum_{\substack{E \in \mathcal{C} \\ A \hookrightarrow E(\mathbb{F}_q)}} \frac{U_{k-2}(t_E,q)}{\# \mathrm{Aut}_{\mathbb{F}_q}(E)}.
\end{equation*}
Here the sum of the right hand side runs over $\mathbb{F}_q$--isomorphism classes of elliptic curves $E$ over $\mathbb{F}_q$ such that there exists an injective homomorphism $A \hookrightarrow E(\mathbb{F}_q)$.

Ihara, Birch, and Kaplan--Petrow gave formulas for the moment $\mathbb{E}_q (U_{k-2}(t_E,q)\Phi_A)$ in terms of the traces of Hecke operators (see \cite[Equation $47$]{Ihara:1967}, \cite[Equation $4$]{Birch:1968}, \cite[Theorem 3]{Kaplan and Petrow:2017}).
The purpose of this paper is to give a formula for $\mathbb{E}_q (U_{k-2}(t_E,q)\Phi_A)$ in terms of the traces of Hecke operators for some congruence groups of $\mathrm{SL}_2(\mathbb{Z})$ when the order $\#A$ is divisible by $p$. 

Let $A$ be a finite abelian group. Assume that there exists an isomorphism
\[A \cong \mathbb{Z} / p^rn_1 \mathbb{Z} \times \mathbb{Z}/n_2\mathbb{Z}, \]
where $n_2$ divides $n_1$, and $n_1$ is coprime to $p$.
It is well--known that if there exists an injective homomorphism $A \hookrightarrow E(\mathbb{F}_q)$, then $A$ is isomorphic to the above group.
We define the congruence subgroup $\Gamma(p^rn_1,n_2)$ of $\mathrm{SL}_2(\mathbb{Z})$ by
\[ \Gamma(p^rn_1,n_2) := \left\{\begin{bmatrix}
            a & b \\
            c& d \\
        \end{bmatrix} \in \mathrm{SL}_2(\mathbb{Z}) : c \equiv 0 \ (\bmod{p^rn_1n_2}),\ a \equiv d \equiv 1 \ (\bmod{p^rn_1}) \right\}.\]
Let $S_k(\Gamma(p^rn_1,n_2))$ denote the $\mathbb{C}$--vector space of cusp forms of weight $k$ with respect to $\Gamma(p^rn_1,n_2)$.
Let $d \in (\mathbb{Z}/p^rn_1\mathbb{Z})^{\times}$. 
By the Eichler--Selberg trace formula, the trace of the Hecke operator $T_q \langle d \rangle$ on $S_k(\Gamma(p^rn_1,n_2))$ is expressed as the sum of four terms as follows (see Theorem \ref{trace_gamma_N,M} for details).
\begin{equation*}
\begin{split}
    T_{\mathrm{trace}}(p^rn_1,n_2) &= \mathrm{Tr}(T_q \langle d \rangle \mid S_k(\Gamma(p^rn_1,n_2)))\\
    &=  T_{\mathrm{id}}(p^rn_1,n_2) - T_{\mathrm{ell}}(p^rn_1,n_2) - T_{\mathrm{hyp}}(p^rn_1,n_2) + T_{\mathrm{dual}}(p^rn_1,n_2).
\end{split}
\end{equation*}
Here the term $T_{\mathrm{ell}}$ is derived from the elliptic conjugacy classes of $\Gamma(p^rn_1,n_2)$, and expressed as the sum of the class numbers of imaginary quadratic orders.

For a positive integer $n \geq 1$, we define Euler's totient function $\varphi(n)$, the function $\psi(n)$, and the function $\phi(n)$ as follows.  
\begin{equation*}
    \varphi(n) := n \prod_{p \mid n} \left( 1-\dfrac{1}{p} \right),\quad
        \psi(n) := n\prod_{p \mid n} \left(1 + \dfrac{1}{p} \right),\quad
        \phi(n) := n \prod_{p \mid n} (-\varphi(p)).
\end{equation*}

To focus on the term about the class numbers, for an integer $\lambda \geq 1$ dividing $\gcd(d^2q -1,n_1)$,
we define $T_{p^rn_1,\lambda}(q,d)$ by
    \begin{equation*}
        T_{p^rn_1,\lambda}(q,d) := \frac{\psi((p^rn_1)^2/\lambda^2)\varphi(p^rn_1/\lambda)}{\psi((p^rn_1)^2) \varphi(p^rn_1)}T_{\mathrm{ell}}(p^rn_1,\lambda).
    \end{equation*}
    We have the following explicit expression for $T_{p^rn_1,\lambda}(q,d)$.
\[T_{p^rn_1,\lambda}(q,d) = \sum_{\Lambda \mid (L/ \lambda)} \varphi(\Lambda^2) \varphi\left( \frac{p^rn_1}{\lambda  \Lambda} \right) \sum_{t^2 <4q}U_{k-2}(t,q) H_{p^rn_1,\lambda \Lambda}(t,q,d),\]
where $a \mid b$ means $a$ divides $b$. Here, $H_{p^rn_1,\lambda \Lambda}(t,q,d)$ is defined as a sum of class numbers of quadratic orders (see Definition \ref{H_n_1,n_2} for details).

The following theorem is the main theorem of this paper and generalizes the results of Ihara, Birch, and Kaplan--Petrow \cite[Theorem 3]{Kaplan and Petrow:2017}.

\begin{thm}\label{MainTheorem}
Let $p$ be a prime, and $q$ be a power of $p$. Let $A = \mathbb{Z}/p^rn_1\mathbb{Z} \times \mathbb{Z}/n_2\mathbb{Z}$. Assume that $n_2 \mid n_1$, $\gcd(n_1,q)=1$, $r \geq 1$, and $k \geq 2$.
\begin{enumerate}
    \item If $q \equiv 1 \pmod{n_2}$, we have
    \begin{equation*}
    \begin{split}
        \mathbb{E}_q (U_{k-2}(t_E,q)\Phi_A) = \frac{1}{q \varphi(p^rn_1 / n_2)} \sum_{\nu \mid \frac{\gcd (q-1,p^rn_1)}{n_2}} \phi (\nu) T_{p^rn_1,n_2 \nu}(q,1).
    \end{split}
    \end{equation*}
    \item If $q \not\equiv 1 \pmod{n_2}$, we have
    \begin{equation*}
        \mathbb{E}_q (U_{k-2}(t_E,q)\Phi_A) =  0.
    \end{equation*}
\end{enumerate}
\end{thm}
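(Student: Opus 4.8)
The strategy is to count elliptic curves $E/\mathbb{F}_q$ with a prescribed subgroup by passing through moduli interpretations and Deuring's correspondence, then matching the resulting class-number sums against the elliptic term $T_{\mathrm{ell}}$ of the Eichler--Selberg trace formula for $\Gamma(p^rn_1,n_2)$. First I would translate the condition $A \hookrightarrow E(\mathbb{F}_q)$ with $A = \mathbb{Z}/p^rn_1\mathbb{Z}\times\mathbb{Z}/n_2\mathbb{Z}$ into a condition on the Frobenius endomorphism $\pi_E$: the prime-to-$p$ part $\mathbb{Z}/n_1\mathbb{Z}\times\mathbb{Z}/n_2\mathbb{Z}$ sits inside $E[n_1]$, which forces $\pi_E \equiv 1 \pmod{n_2}$ on the relevant Tate module and constrains $t_E$ modulo $n_1$ and $n_2$; in particular the existence of the $n_2$-torsion over $\mathbb{F}_q$ already forces $q \equiv 1 \pmod{n_2}$, which is exactly the dichotomy in the statement — case (2) is the empty sum. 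The $p$-part $\mathbb{Z}/p^r\mathbb{Z}\hookrightarrow E(\mathbb{F}_q)$ says $E$ is ordinary (so $p \nmid t_E$) and that $\pi_E - 1$ is divisible by $p^r$ in the (étale quotient of the) $p$-divisible group, i.e. $p^r \mid \gcd(t_E - q - 1, \ldots)$ in the appropriate local sense; this is where $r\ge 1$ matters and is the genuinely new input beyond Kaplan--Petrow.

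Next I would rewrite the moment $\mathbb{E}_q(U_{k-2}(t_E,q)\Phi_A)$ as a weighted sum over admissible traces $t$ with $t^2 < 4q$ (supersingular contributions vanish since $p \mid t$ contradicts ordinarity when $p^r \mid \#E(\mathbb{F}_q)$ with $r\ge 1$, unless $q=p$, a case one checks separately) of $U_{k-2}(t,q)$ times a weighted count of isomorphism classes $E$ with $t_E = t$ and $A\hookrightarrow E(\mathbb{F}_q)$, each counted with weight $1/\#\mathrm{Aut}_{\mathbb{F}_q}(E)$. By Deuring's theorem this weighted count equals a sum of Hurwitz-type class numbers $H_{p^rn_1,\lambda\Lambda}(t,q,d)$ of orders in $\mathbb{Q}(\sqrt{t^2-4q})$ lying between $\mathbb{Z}[\pi]$ and the maximal order, where the level structure $A$ selects those orders for which the relevant torsion is rational — this is precisely the definition imported from Definition \ref{H_n_1,n_2}. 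I would then invoke the explicit formula for $T_{p^rn_1,\lambda}(q,d)$ quoted in the excerpt, together with Theorem \ref{trace_gamma_N,M}, to identify $\sum_t U_{k-2}(t,q) H_{p^rn_1,\lambda\Lambda}(t,q,d)$ with a normalized piece of $T_{\mathrm{ell}}(p^rn_1,\lambda)$.

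The remaining work is combinatorial: I must show that summing the local class-number counts over all divisors $\lambda = n_2\nu$ of $\gcd(q-1,p^rn_1)/n_2 \cdot n_2$ with the Möbius-type weights $\phi(\nu)$ exactly performs the inclusion--exclusion that converts "$A$ is contained in $E(\mathbb{F}_q)$" (injective homomorphism, i.e. $A$ is a subgroup) into the raw count of curves whose full rational torsion contains the relevant cyclic pieces at each level $\lambda$. Concretely, a curve $E$ with $E(\mathbb{F}_q)[n_1^\infty] \cong \mathbb{Z}/p^rn_1 a\mathbb{Z}\times\mathbb{Z}/b\mathbb{Z}$ is counted with multiplicity $\sum_{\nu}\phi(\nu)$ over the $\nu$ dividing the appropriate gcd, and the arithmetic identity $\sum_{\nu \mid m}\phi(\nu) = [\,\text{something}\,]$ collapses this to the indicator of $A\hookrightarrow E(\mathbb{F}_q)$; the normalizing factor $1/(q\varphi(p^rn_1/n_2))$ absorbs the ratio of $\psi$'s and $\varphi$'s built into $T_{p^rn_1,\lambda}(q,d)$ and the index $[\mathrm{SL}_2(\mathbb{Z}):\Gamma(p^rn_1,n_2)]$-type normalization. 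I expect the main obstacle to be bookkeeping at the prime $p$: ensuring that the $p$-adic condition $\mathbb{Z}/p^r\mathbb{Z}\hookrightarrow E(\mathbb{F}_q)$ is correctly reflected in which orders $\mathbb{Z}[\pi]\subseteq\mathcal{O}\subseteq\mathcal{O}_K$ contribute and with what $p$-power conductor constraints, and that this meshes with the prime-to-$p$ level structure without double counting — in other words, checking that the trace formula for $\Gamma(p^rn_1,n_2)$ (where $p$ divides the level) produces exactly the $p$-part of the class-number sum dictated by Deuring's correspondence, rather than a twisted or shifted version of it.
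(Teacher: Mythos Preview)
Your overall architecture is right and matches the paper: express the moment as $\sum_{t^2<4q} U_{k-2}(t,q)$ times a weighted class-number count, identify that count with $H_{p^rn_1,n_2}(t,q,1)$, and recover this from the elliptic terms $T_{p^rn_1,n_2\nu}(q,1)$ by an inversion in $\nu$. Two places, however, are more complicated in your sketch than they need to be, and one place is not quite right.

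\textbf{The elliptic-curve side.} You propose to redo the Deuring analysis at $p$, worrying about which orders $\mathbb{Z}[\pi]\subseteq\mathcal{O}\subseteq\mathcal{O}_K$ contribute with what $p$-power conductor. This is unnecessary. Since $r\ge 1$, any $E$ with $A\hookrightarrow E(\mathbb{F}_q)$ is ordinary, so $p\nmid t_E$; and on the ordinary locus, Kaplan--Petrow's Theorem~7 (stated here as Theorem~\ref{P_q_ordinary}) already gives $\mathbb{P}_q(\mathcal{C}(A,t)) = \tfrac{1}{q}H_{p^rn_1,n_2}(t,q,1)$ for \emph{arbitrary} $n_1$, with no coprimality hypothesis. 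The supersingular and $t^2=4q$ contributions are simply zero. So Lemma~\ref{E_q(p^rn_1,n_2)} is immediate. (Your parenthetical ``unless $q=p$, a case one checks separately'' is not needed.)

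\textbf{The M\"obius step.} Your description of the $\phi$-sum as an inclusion--exclusion on torsion structures---``a curve with $E(\mathbb{F}_q)[n_1^\infty]\cong\ldots$ is counted with multiplicity $\sum_\nu\phi(\nu)$''---is not what is happening. The identity is purely formal. By definition $T_{p^rn_1,\lambda}(q,d) = \sum_{\Lambda\mid L/\lambda}\varphi(\Lambda^2)\varphi(p^rn_1/\lambda\Lambda)\sum_t U_{k-2}(t,q)H_{p^rn_1,\lambda\Lambda}(t,q,d)$. Summing against $\phi(\nu)$ over $\nu\mid L/n_2$ with $\lambda=n_2\nu$, substituting $\mu=\nu\Lambda$, and using $\sum_{\nu\Lambda=\mu}\phi(\nu)\varphi(\Lambda^2)=\delta(\mu,1)$ (this is the defining property of $\phi$) kills every term except $\mu=1$, leaving exactly $\varphi(p^rn_1/n_2)\sum_t U_{k-2}(t,q)H_{p^rn_1,n_2}(t,q,d)$. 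No curve-by-curve bookkeeping is involved; see Lemma~\ref{sumT_p^rn_1,n_2}.

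\textbf{Where the genuine work is.} You are right that the new content lies at the prime $p$, but not on the elliptic-curve side: it is on the trace-formula side, namely in showing that the elliptic term $T_{\mathrm{ell}}(p^rN,M)$ of the Eichler--Selberg formula for level divisible by $p$ is still given by the displayed sum of $H_{p^rN,\Lambda M}$'s. The paper does this by a direct local computation: one shows $W_{p^r,1,m}(d) = D(t;p^r)$ by solving $c^2-tc+q\equiv 0\pmod{p^r}$ explicitly (three cases according to whether $p\mid t$ and whether $p^r\mid q$), and then uses multiplicativity to factor $C_{p^rN,M}(t,q,d)=D(t;p^r)\cdot C_{N,M}(t,q,d)$ and likewise $H_{p^rN,\Lambda M}=D(t;p^r)\cdot H_{N,\Lambda M}$ (Lemmas~\ref{W_p^rN,1,m(d)}, \ref{C_1,p^rN,M(t,q,d)}, \ref{H_p^rn_1,n_2(t,q,1)}). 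This is concrete congruence arithmetic, not an analysis of conductors of orders. Your plan would eventually arrive at the same place, but the path through ``which orders contribute'' is a detour.
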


\begin{rem}
    In the case of $(2)$ in Theorem \ref{MainTheorem}, by using the Weil pairing, we can check that there dose not exist an elliptic curve $E$ over $\mathbb{F}_q$ such that there exists an injective homomorphism $A \hookrightarrow E(\mathbb{F}_q)$.
\end{rem}

The oraganization of this paper is as follows. In Section $2$, we introduce some notation. In Section $3$, we recall some facts about the class numbers of imaginary quadratic orders and define a sum of class numbers. In Section $4$, we revisit modular forms and the Eichler--Selberg trace formula. In Section $5$, we calculate the elliptic term in the Eichler--Selberg trace formula. Section $6$ organizes some results for elliptic curves over a finite field. Finally, in Section $7$, we prove Theorem \ref{MainTheorem} combining the results on modular forms and elliptic curves.

\section{Notation}
In this section, we introduce some notation.
Let $\mathbb{Z}$, $\mathbb{Q}$, and $\mathbb{C}$ denote the set of all integers, the set of all rational numbers, and the set of all complex numbers, respectively. 
Let $p$ denote a fixed prime, and $l$ denote a prime.
Let $q$ denote a power of $p$, and $\mathbb{F}_q$ denote a finite field with $q$ elements. 
For positive integers $a,b \geq 1$, $a \mid b$ means that $a$ divides $b$.
Let $(a,b):= \mathrm{gcd}(a,b)$ denote the greatest common divisor of $a$ and $b$. Let $v_p(a)$ denote the $p$--adic valuation of $a$ such that $v_p(p) = 1$. 

Let $n_1,n_2 \geq 1$ be positive integers such that $n_2 \mid n_1$.
For a positive integer $\nu \geq 1$, if every prime factor $l$ of $\nu$ satisfies $v_l(\nu) = v_l(n_1)$, we write $\nu \mid \mid n_1$ and say that $\nu$ is a \textit{full divisor} of $n_1$.

Following Kaplan--Petrow, 
for positive integers $\mu, \nu \geq 1$, we write $\mu \prec \nu$ if the following conditions are satisfied:
    \begin{enumerate}
        \item For every prime factor $l$ of $\nu$, $\mu$ is divisible by $l$ but $\mu$ is not divisible by any other primes.
        \item For every prime factor $l$ of $\nu$, we have $v_l (\mu) \leq v_l (n_1 / n_2) -1$.
    \end{enumerate}
    We also write $\mu \underset{\{n_1,n_2\}}{\prec} \nu$ if we emphasize $n_1,n_2$ used in the condition (2). 
    
Let $c \geq 1$ be a positive integer and $a,b \in \mathbb{Z}$ be integers. We define indicator functions $\delta(a,b),\delta_c(a,b)$ as follows, respectively.
\begin{equation*}
    \delta(a,b) := \begin{cases}
        1 & \text{if } a=b\\
        0 & \text{if } a \neq b
    \end{cases}, \qquad \delta_c(a,b) :=\begin{cases}
        1 & \text{if } a \equiv b \pmod{c} \\
        0 & \text{if } a \not\equiv b \pmod{c}.
    \end{cases}
\end{equation*}

Let $n \geq 1$ be a positive integer.
Let $\sigma(n):=\sum_{d \mid n}d$ denote the sum of divisors of $n$.
We define Euler's totient function $\varphi(n)$, the function $\psi(n)$, the function $\phi(n)$ as follows.  
\begin{equation*}
    \varphi(n) := n \prod_{l \mid n} \left( 1-\dfrac{1}{l} \right),\quad
        \psi(n) := n\prod_{l \mid n} \left(1 + \dfrac{1}{l} \right),\quad
        \phi(n) := n \prod_{l \mid n} (-\varphi(l)).
\end{equation*}
The function $\phi(n)$ is the inverse of the function $\varphi(n^2)$ in the Dirichlet convolution.
Then, the following holds.
\begin{equation*}
    \sum_{d \mid n} \varphi(d^2) \phi\left( \dfrac{n}{d} \right) = \delta(n,1).
\end{equation*}
Let $\omega(n)$ denote the number of distinct prime divisors of $n$. We define the Liouville function $\lambda(n)$ by $\lambda(n) := (-1)^{\omega(n)}$.

We use the following notation concerning the summation.
For positive integers $N,m,t,q \geq 1$, we put
    \begin{equation*}
    \begin{split}
        S(N,m,t,q) := &\{ c \in (\mathbb{Z}/N\mathbb{Z})^{\times} : \text{there exists an element } \tilde{c} \in (\mathbb{Z}/N(N,m)\mathbb{Z})^{\times}\\
        & \quad \text{such that } \tilde{c} \equiv c \ (\bmod{N}) \ \text{and } \tilde{c}^2-t\tilde{c}+q \equiv 0 \ (\bmod{N(N,m)})\}.
    \end{split}
    \end{equation*}
    For a Dirichlet character $\chi$ modulo $N$, the sum $\displaystyle\sum_{c \in S(N,m,t,q)} \chi(c)$
    is also written as $\displaystyle \sum_{c\in (\mathbb{Z}/N\mathbb{Z})^{\times}}^{*m} \chi(c)$ in \cite[p.1343, (4.3)]{Kaplan and Petrow:2017}.
    
\section{Class numbers of imaginary quadratic orders}
    Let $d < 0$ be a negative integer such that $d \equiv 0 \ \text{or }1 \ (\bmod{4})$. Let $h(d)$ denote the class number of the unique quadratic order of discriminant $d$.
    We define the weighted class number $h_w(d)$ as follows.
    \begin{equation*}
        h_w (d) := 
        \begin{cases}
            h(d)/3 & (\text{if } d= -3) \\
            h(d)/2 & (\text{if } d= -4) \\
            h(d) & (\text{if } d \neq -3,-4). \\
        \end{cases}
    \end{equation*}
    For a negative integer $d <0$ such that $d \equiv 2 \ \text{or } 3 \ (\bmod{4})$, we put $h_w(d) := 0$.
    For a negative integer $\Delta < 0$ with $\Delta \equiv 0 \ \text{or }1 \ (\bmod{4})$, let
    \begin{equation*}
        H(\Delta) := \sum_{d^2 \mid \Delta} h_w \left( \frac{\Delta}{d^2} \right)
    \end{equation*}
    be the \textit{Hurwitz-Kronecker class number}. 
    
To define a function $D(t;n)$ which are used for congruence conditions, we introduce a necessary lemma.

In the following, let $0 \leq m \leq n$ be integers and $l$ be a prime.
Let $\pi:\mathbb{Z} / l^n \mathbb{Z} \to \mathbb{Z} / l^m \mathbb{Z}$ be the canonical map sending $x \ (\bmod{l^n})$ to $x \ (\bmod{l^m})$. Let $c \in \mathbb{Z} / l^m \mathbb{Z}$. If an element $\tilde{c} \in \mathbb{Z} / l^n \mathbb{Z}$ satisfies $\pi(\tilde{c}) = c$, we call $\tilde{c}$ a lift of $c$ modulo $l^n$.   
Let $c_0 \ (\bmod{l^n})$ be a lift of $c$. Then we can express all lifts of $c$ modulo $l^n$ as $c_0 + j l^m \ (\bmod{l^n})$ where $0 \leq j < l^{n-m}$.

\begin{lem}\label{Dq+D^-1}
    Let $0 \leq B \leq C$, $D \in (\mathbb{Z} / l^C \mathbb{Z})^{\times} $, and $D^2 q \equiv 1 \ (\bmod{l^B})$.
    For any $C \leq i \leq B+C$, $Dq + D^{-1} \ (\bmod{l^i})$ is independent of the choice of a lift of $D$ modulo $l^i$ and well-defined.
\end{lem}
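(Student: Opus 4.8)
The plan is to fix one lift of $D$ modulo $l^i$, compare it with an arbitrary lift, and reduce the asserted independence to a single divisibility statement. First I would choose a lift $D_0 \in \mathbb{Z}/l^i\mathbb{Z}$ of $D$. Since $i \ge C$, every lift of $D$ modulo $l^i$ has the form $D_0 + l^C j$ with $0 \le j < l^{i-C}$, and each such element is again a unit in $\mathbb{Z}/l^i\mathbb{Z}$ because its reduction modulo $l$ coincides with that of $D$. Hence it suffices to prove
\[
(D_0 + l^C j)\, q + (D_0 + l^C j)^{-1} \equiv D_0 q + D_0^{-1} \pmod{l^i}
\]
for every integer $j$, where the inverses are taken in $(\mathbb{Z}/l^i\mathbb{Z})^\times$.

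The key computation is the inverse $(D_0 + l^C j)^{-1}$ modulo $l^i$. Writing $D_0 + l^C j = D_0\bigl(1 + l^C j D_0^{-1}\bigr)$ and expanding the geometric series, one gets $(D_0 + l^C j)^{-1} \equiv D_0^{-1} - l^C j D_0^{-2} \pmod{l^{2C}}$, since all remaining terms are divisible by $l^{2C}$. This expansion is valid modulo $l^i$ precisely because the hypotheses $B \le C$ and $i \le B + C$ force $i \le 2C$. Substituting, the difference of the two expressions for $Dq + D^{-1}$ is congruent to $l^C j\,(q - D_0^{-2})$ modulo $l^i$, so the proof reduces to checking that $l^i \mid l^C j\,(q - D_0^{-2})$.

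For this I would use the hypothesis $D^2 q \equiv 1 \pmod{l^B}$. Because $B \le C$, the lift $D_0$ satisfies $D_0 \equiv D \pmod{l^B}$, whence $D_0^2 q \equiv 1 \pmod{l^B}$; multiplying by $D_0^{-2}$, which is a unit modulo $l^B$, gives $q \equiv D_0^{-2} \pmod{l^B}$, i.e. $l^B \mid (q - D_0^{-2})$. Therefore $l^C j\,(q - D_0^{-2})$ is divisible by $l^{B+C}$, and since $i \le B + C$ it is divisible by $l^i$, which is exactly what is needed. The same chain of equalities shows that $Dq + D^{-1} \pmod{l^i}$ is a well-defined element of $\mathbb{Z}/l^i\mathbb{Z}$, independent of the chosen lift.

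The only point requiring care is the range in which the geometric-series expansion of $(D_0 + l^C j)^{-1}$ is legitimate; but this is governed entirely by the two numerical constraints $B \le C$ and $C \le i \le B + C$, so there is no genuine obstacle and the lemma follows from this short direct computation.
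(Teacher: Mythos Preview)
Your argument is correct. The paper does not actually prove this lemma but simply cites \cite[Lemma 5]{Kaplan and Petrow:2017}, so there is no in-paper proof to compare against; your direct computation via the first-order expansion $(D_0+l^Cj)^{-1}\equiv D_0^{-1}-l^CjD_0^{-2}\pmod{l^i}$ (valid since $i\le 2C$) together with $l^B\mid(q-D_0^{-2})$ is exactly the intended elementary verification, and every step checks out.
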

\begin{proof}
    See \cite[Lemma 5]{Kaplan and Petrow:2017}.
\end{proof}

\begin{dfn}
    Let $n_1, n_2 \geq 1$ be positive integers with $n_2 \mid n_1$, $d \in (\mathbb{Z} / n_1 \mathbb{Z})^{\times}$ with $d^2q \equiv 1 \ (\bmod{n_2})$. 
    Let $\widetilde{d} \in (\mathbb{Z}/n_1n_2\mathbb{Z})^{\times}$ be a lift of $d$ modulo $n_1n_2$.
    For any positive integer $n \mid n_1n_2$, we define the function $D(t;n)$ by
    \begin{equation*}
        D(t;n) := \delta_n (\widetilde{d}q+\widetilde{d}^{-1},t) = 
        \begin{cases}
            1 & (\text{if } \widetilde{d}q+\widetilde{d}^{-1} \equiv t \ (\bmod{n})) \\
            0 & (\text{otherwise}). 
        \end{cases}
    \end{equation*}
    Note the by the Chinese remainder theorem and Lemma \ref{Dq+D^-1}, 
    $\widetilde{d}q+\widetilde{d}^{-1} \ (\bmod{n})$ is independent of the choice of $\widetilde{d}$.
\end{dfn}

\begin{rem}
    Note that even if $p \mid n_1$, $\widetilde{d}q+\widetilde{d}^{-1} \ (\bmod{n_1n_2})$ is independent of the choice of $\widetilde{d}$, since $(n_2,p)=1$ and we do not consider a lift of $d$ modulo a power of $p$. For a fixed $t$, if $(n,m) = 1$, we have $D(t;n)D(t;m) = D(t;nm)$.
When $d=1$, $D(t;n) = 1$ is the same as $n \mid q+1-t$ and we can check whether the order $\#E(\mathbb{F}_q)$ with $t_E = t$ is divisible by $n$. 
\end{rem} 

\begin{dfn}\label{D_nu_mu(t)}
    Let $n_1,n_2 \geq 1$ be positive integers with $n_2 \mid n_1$. Let $d \in (\mathbb{Z}/ n_1 \mathbb{Z})^{\times}$ with $d^2q \equiv 1 \ (\bmod{n_2})$.
    For positive integers $\nu,\mu \geq 1$, we define the function $D_{\nu,\mu}(t)$ (which also depends on $q,d,n_1,n_2$) as follows.
    \begin{equation*}
        D_{\nu,\mu}(t) := \prod_{\substack{l \mid \nu \\l\mid \mu}} (D(t;l^{v_l (n_1 n_2 \mu) -1}) - D(t;l^{v_l (n_1 n_2 \mu)})) \cdot \prod_{\substack{l \mid \nu \\ l \nmid \mu}} D(t;l^{v_l(n_1 n_2)}).
    \end{equation*}
    We also write $D_{\nu,\mu}(n_1,n_2,t)$ if we emphasize $n_1,n_2$.
\end{dfn}
If $d=1$ and $D_{\nu,\mu}(t) = 1$, we have $v_l (q+1-t) = v_l (n_1 n_2 \mu) -1$ for each prime divisor $l$ of $\mu$.
\begin{rem}
    Definition \ref{D_nu_mu(t)} is slightly different from the definition of $D_{\nu,\mu}(t)$ in Kaplan--Petrow's paper \cite[p.1337, Equation (3.6)]{Kaplan and Petrow:2017}.
    In this paper, we adopt the above Definition \ref{D_nu_mu(t)}.
    In \cite{Kaplan and Petrow:2017}, an explicit calculation is only performed in the case where both $\nu$ and $\mu$ are powers of $l$, and in that case, the definition $D_{\nu,\mu}(t)$ in \cite{Kaplan and Petrow:2017} agrees with the above definition.
    However, if there exists a prime divisor $l$ of $\nu$ satisfying  $l \nmid \mu$, it is necessary to modify the definition of $D_{\nu,\mu}(t)$ as in Definition \ref{D_nu_mu(t)}.
\end{rem}

We define sum of class numbers $H_{n_1,n_2}(t,q,d)$ to represent the number of $\mathbb{F}_q$--isomorphism classes of elliptic curves over a finite field.
\begin{dfn}\label{H_n_1,n_2}
Let $n_1,n_2 \geq 1$ be positive integers with $n_2 \mid n_1$. Let $d \in (\mathbb{Z}/ n_1 \mathbb{Z})^{\times}$ with $d^2q \equiv 1 \ (\bmod{n_2})$. We define the sum of class numbers $H_{n_1,n_2}(t,q,d)$ as follows.
\begin{equation}
    \begin{split}
        H_{n_1,n_2}(t,q,d) &:= \frac{1}{2} H \left( \frac{t^2-4q}{n_2^2} \right) \delta_{n_2} (d^2q,1)D(t;n_1 n_2) \\
        &\quad + \sum_{\substack{m \mid\mid n_1 \\ m \geq 2}} \sum_{\mu \prec m} \lambda(m) \frac{1}{2} H \left( \frac{t^2-4q}{(n_2 \mu)^2} \right) \delta_{n_2 \mu}(d^2q,1) D_{n_1,\mu}(t).
    \end{split}
\end{equation}
Here $\mu \prec m$ means $\mu \underset{\{n_1,n_2\}}{\prec} m$, and $D_{n_1,\mu}(t)$ means $D_{n_1,\mu}(n_1,n_2,t)$.
\end{dfn}

\begin{lem}\label{H_p^rn_1,n_2(t,q,1)}
    Assume that $(q,n_1) =1$ and $r \geq 1$.
    We have the following equation.
    \begin{equation*}
        H_{p^rn_1,n_2}(t,q,d) = D(t;p^r) H_{n_1,n_2}(t,q,d).
    \end{equation*}
\end{lem}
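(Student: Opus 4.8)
The plan is to expand both sides via Definition~\ref{H_n_1,n_2} applied with $n_1$ replaced by $p^rn_1$, and to match the terms; the whole point will be that the full divisors of $p^rn_1$ that are divisible by $p$ contribute nothing. First I record that $(q,n_1)=1$ together with $q$ being a power of $p$ gives $(p,n_1n_2)=1$ and $p\mid q$. For the first summand of $H_{p^rn_1,n_2}(t,q,d)$, coprimality of $p^r$ and $n_1n_2$ together with the multiplicativity $D(t;n)D(t;m)=D(t;nm)$ for coprime $n,m$ (the remark following the definition of $D(t;n)$) gives $D(t;p^rn_1n_2)=D(t;p^r)D(t;n_1n_2)$; since $\delta_{n_2}(d^2q,1)$ and $H((t^2-4q)/n_2^2)$ depend only on $d\bmod n_2$ and on $n_2$, this summand equals $D(t;p^r)$ times the first summand of $H_{n_1,n_2}(t,q,d)$.

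Next I split the sum over full divisors $m\mid\mid p^rn_1$ with $m\ge 2$. Since $(p,n_1)=1$ and $r\ge1$, such an $m$ is either (A) a full divisor $m\ge 2$ of $n_1$, or (B) of the form $p^rm'$ with $m'\mid\mid n_1$ (automatically $m\ge 2$). For type~(B): $p$ is a prime factor of $m$, so condition~(1) in the definition of $\mu\underset{\{p^rn_1,n_2\}}{\prec}m$ forces $p\mid\mu$, hence $p\mid n_2\mu$; but $p\mid q$, so $d^2q\equiv 0\not\equiv 1\pmod p$, whence $\delta_{n_2\mu}(d^2q,1)=0$ and the whole summand is $0$ regardless of the value of $D_{p^rn_1,\mu}$. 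So type~(B) contributes nothing.

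For type~(A), fix $m\mid\mid n_1$ with $m\ge 2$. Since $p\nmid m$, condition~(1) forces $p\nmid\mu$, and condition~(2) at a prime $l\mid m$ reads $v_l(\mu)\le v_l(p^rn_1/n_2)-1=v_l(n_1/n_2)-1$ because $l\ne p$; hence $\mu\underset{\{p^rn_1,n_2\}}{\prec}m$ and $\mu\underset{\{n_1,n_2\}}{\prec}m$ cut out the same set of $\mu$. In $D_{p^rn_1,\mu}(p^rn_1,n_2,t)$ the primes dividing $p^rn_1$ are $p$ and the primes dividing $n_1$; as $p\nmid\mu$, the prime $p$ contributes the single factor $D(t;p^{v_p(p^rn_1n_2)})=D(t;p^r)$, while for $l\mid n_1$ one has $v_l(p^rn_1n_2\mu)=v_l(n_1n_2\mu)$ and $v_l(p^rn_1n_2)=v_l(n_1n_2)$, so those factors coincide with the ones in $D_{n_1,\mu}(n_1,n_2,t)$. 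Hence $D_{p^rn_1,\mu}(p^rn_1,n_2,t)=D(t;p^r)D_{n_1,\mu}(n_1,n_2,t)$; since $\lambda(m)$, $H((t^2-4q)/(n_2\mu)^2)$ and $\delta_{n_2\mu}(d^2q,1)$ are unchanged, the type-(A) part of the sum equals $D(t;p^r)$ times the full sum in $H_{n_1,n_2}(t,q,d)$.

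Combining the first summand with types (A) and (B) gives $H_{p^rn_1,n_2}(t,q,d)=D(t;p^r)H_{n_1,n_2}(t,q,d)$. The only step carrying real content is the vanishing of the type-(B) contribution, i.e.\ recognizing that whenever a full divisor of $p^rn_1$ is divisible by $p$ the accompanying congruence factor $\delta_{n_2\mu}(d^2q,1)$ is forced to be $0$ because $q\equiv 0\pmod p$; everything else is routine bookkeeping of $l$-adic valuations and of the multiplicativity of $D(t;\,\cdot\,)$ at coprime moduli.
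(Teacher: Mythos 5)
Your proposal is correct and follows essentially the same route as the paper's own proof: factor $D(t;p^rn_1n_2)=D(t;p^r)D(t;n_1n_2)$ by coprimality, observe that any full divisor $m$ of $p^rn_1$ divisible by $p$ forces $p\mid\mu$ and hence $\delta_{n_2\mu}(d^2q,1)=0$ because $p\mid q$, and factor $D_{p^rn_1,\mu}(p^rn_1,n_2,t)=D(t;p^r)D_{n_1,\mu}(n_1,n_2,t)$ when $p\nmid\mu$. Your write-up is in fact slightly more careful than the paper's in spelling out the split of full divisors into the two types and in checking that the conditions $\mu\underset{\{p^rn_1,n_2\}}{\prec}m$ and $\mu\underset{\{n_1,n_2\}}{\prec}m$ agree at primes $l\neq p$.
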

\begin{proof}
    By definition, we have
    \begin{equation*}
    \begin{split}
        &H_{p^rn_1,n_2}(t,q,d) \\
        &= \frac{1}{2} H \left( \frac{t^2-4q}{n_2^2} \right) \delta_{n_2}(d^2q,1) D(t;p^rn_1n_2) \\
        &\quad + \sum_{\substack{m \mid\mid p^rn_1 \\ m \geq 2}} \sum_{\mu \underset{\{p^rn_1,n_2\}}{\prec} m} \lambda(m) \frac{1}{2} H \left( \frac{t^2-4q}{(n_2 \mu^2)} \right) \delta_{n_2 \mu}(d^2q,1) D_{p^rn_1,\mu}(p^rn_1,n_2,t).
    \end{split}
    \end{equation*}
    Since $n_1$ and $n_2$ are coprime to $p$, we have
    \begin{equation*}
        D(t;p^rn_1n_2) = D(t;n_1n_2)D(t;p^r).
    \end{equation*}
    Furthermore, focusing on the condition of the second sum $\mu \underset{\{p^rn_1,n_2\}}{\prec} m$, when $p \mid \mu$, we have $\delta_{n_2\mu}(d^2q,1) = 0$.
    So it is sufficient to consider only those $\mu$ such that $p \nmid \mu$.
    Therefore, the first condition on the first sum can be written as $m \mid\mid n_1$ instead of $m \mid\mid p^r n_1$.
    Also, concerning the function $D_{p^r n_1, \mu}(p^r n_1, n_2, t)$, since $p$ does not divide $\mu$, we have
    \begin{equation*}
        D_{p^rn_1,\mu}(p^rn_1,n_2,t) = D_{n_1,\mu}(n_1,n_2,t) D(t;p^r).
    \end{equation*}
    Hence, the following holds.
    \begin{equation*}
    \begin{split}
        &H_{p^rn_1,n_2}(t,q,d) \\
        &= \frac{1}{2} H \left( \frac{t^2-4q}{n_2^2} \right) \delta_{n_2}(d^2q,1) D(t;n_1n_2)D(t;p^r) \\
        &\quad + \sum_{\substack{m \mid\mid n_1 \\ m \geq 2}} \sum_{\mu \underset{\{n_1,n_2\}}{\prec} m} \lambda(m) \frac{1}{2} H \left( \frac{t^2-4q}{(n_2 \mu^2)} \right) \delta_{n_2 \mu}(d^2q,1) D_{n_1,\mu}(n_1,n_2,t)D(t;p^r).
    \end{split}
    \end{equation*}
    Therefore, we have $H_{p^rn_1,n_2}(t,q,d) = D(t;p^r) H_{n_1,n_2}(t,q,d)$.
\end{proof}

\section{The Eichler--Selberg trace formula for $\Gamma_1(N)$ and $\Gamma(N,M)$}

The Eichler--Selberg trace formula for $\mathrm{SL}_2(\mathbb{Z})$ was shown by Selberg \cite{Selberg:1956}. When the index of the Hecke operator and N are coprime, the trace formula for $\Gamma_0(N)$ and $\chi$ was proved  by Hijikata \cite{Hijikata:1974}, and the general case was proved by Oesterl\'{e} \cite{Oesterle:1977}.
In this section, we first define linear operators on $S_k(\Gamma(N, M))$. Then, we introduce the Eichler--Selberg trace formula, which is an important formula for the trace of the Hecke operator $T_q$ on $S_k(\Gamma_1(N), \chi)$, and introduce the Eichler--Selberg trace formula for $\Gamma(N, M)$. We use the same notation for modular forms as in \cite[Sections 5.1 and 5.2]{Diamond and Shurman:2005}.

In this section, let $M,N \geq 1$ be positive integers with $M \mid N$.
We define $\mathrm{SL}_2(\mathbb{Z})$ and $\mathrm{GL}_2^+ (\mathbb{Q})$ as follows.
\begin{equation*}
    \mathrm{SL}_2(\mathbb{Z}) := \left\{ \begin{bmatrix}
        a & b \\
        c & d\\
    \end{bmatrix} : a,b,c,d \in \mathbb{Z} \quad \text{and } ad-bc = 1 \right\}.
\end{equation*}
\begin{equation*}
    \mathrm{GL}_2^+ (\mathbb{Q}) := \left\{ \begin{bmatrix}
        a & b \\
        c& d \\
    \end{bmatrix} : a,b,c,d \in \mathbb{Q} \quad \text{and } ad-bc>0 \right\}.
\end{equation*}

We define congruence subgroups $\Gamma_0(N)$, $\Gamma_1(N)$, $\Gamma(N,M)$ of $\mathrm{SL}_2(\mathbb{Z})$ as follows, respectively. 
\begin{equation*}
    \begin{split}
        \Gamma_0(N) &:= \left\{\begin{bmatrix}
            a & b \\
            c& d \\
        \end{bmatrix} \in \mathrm{SL}_2(\mathbb{Z}) : c \equiv 0 \ (\bmod{N}) \right\},\\
        \Gamma_1(N) &:= \left\{\begin{bmatrix}
            a & b \\
            c& d \\
        \end{bmatrix} \in \mathrm{SL}_2(\mathbb{Z}) : c \equiv 0 \ (\bmod{N}) \ \text{and } a \equiv d \equiv 1 \ (\bmod{N}) \right\},\\
        \Gamma(N,M) &:= \left\{\begin{bmatrix}
            a & b \\
            c& d \\
        \end{bmatrix} \in \mathrm{SL}_2(\mathbb{Z}) : c \equiv 0 \ (\bmod{MN}) \ \text{and } a \equiv d \equiv 1 \ (\bmod{N}) \right\}.
    \end{split}
\end{equation*}

From the exact sequence
\begin{equation*}
    1 \to \Gamma(N,M) \to \Gamma_0 (MN) \to (\mathbb{Z}/N \mathbb{Z})^{\times} \to 1,
\end{equation*}
the action of $\Gamma_0(MN)$ on $S_k(\Gamma(N, M))$ can be identified with the action of $(\mathbb{Z}/N\mathbb{Z})^{\times}$.
Therefore, we can define the diamond operator $\langle d \rangle$ in the same way as in the case of $\Gamma_1(N)$.
For each $d \in (\mathbb{Z} / N\mathbb{Z})^{\times}$, take an element $\gamma = \begin{bmatrix}
        a & b \\
        c & \delta \\
    \end{bmatrix} \in \Gamma_0(N)$ with $\delta \equiv d \ (\bmod{N})$.
We define the diamond operator
\begin{equation*}
    \langle d \rangle: S_k(\Gamma(N, M)) \to S_k(\Gamma(N, M))
\end{equation*}
by $\langle d \rangle f := f[\gamma]_k$. 
It is easy to see that $\langle d \rangle$ is independent of the choice of $\gamma$.

Moreover, for each prime $p$, we consider the following double coset decomposition:
\begin{equation*}
    \Gamma(N, M) \begin{bmatrix}
        1 & 0 \\
        0 & p \\
    \end{bmatrix} \Gamma(N, M) = \bigsqcup_j \Gamma(N, M) \beta_j, \quad \text{where } \beta_j \in \mathrm{GL}_2^+ (\mathbb{Q}).
\end{equation*}
Using this decomposition, we define the Hecke operator
\begin{equation*}
    T_p: S_k(\Gamma(N, M)) \to S_k(\Gamma(N, M))
\end{equation*}
by $T_p(f) := \sum_j f[\beta_j]_k$.

We can check that $\langle d \rangle$ and $T_p$ commute with each other.
Let $v \geq 2$ be a positive integer. For a positive integer $v \geq 2$, we define the Hecke operator $T_{p^v}$ inductively as follows:
\begin{equation*}
    T_{p^v} := T_p T_{p^{v-1}} - p^{k-1} \langle p \rangle T_{p^{v-2}}
\end{equation*}
 where $T_1$ is the identity map.

Let $\chi: (\mathbb{Z}/N\mathbb{Z})^{\times} \to \mathbb{C}^{\times}$ be a Dirichlet character modulo $N$. 
We define the $\chi$-eigenspace $S_k(\Gamma(N, M), \chi)$ of $S_k(\Gamma(N, M))$ as follows.
\begin{equation*}
    S_k(\Gamma(N,M),\chi) := \{f \in S_k(\Gamma(N,M)) : \langle d \rangle f = \chi(d)f \quad \text{for all } d \in (\mathbb{Z} / N\mathbb{Z})^{\times}\}.
\end{equation*}
Similarly, we define the $\chi$-eigenspace of $S_k(\Gamma_1(N), \chi)$ of $S_k(\Gamma_1(N))$ as follows.
\begin{equation*}
    S_k(\Gamma_1(N),\chi) := \{f \in S_k(\Gamma_1(N)) : \langle d \rangle f = \chi(d)f \quad \text{for all } d \in (\mathbb{Z} / N\mathbb{Z})^{\times}\}.
\end{equation*}
The spaces of modular forms $S_k(\Gamma_1(N))$, $S_k(\Gamma(N, M))$ have the following eigenspace decomposition:
\begin{equation*}
     S_k(\Gamma_1(N)) = \bigoplus_{\chi \bmod{N}} S_k(\Gamma_1(N),\chi), \qquad S_k(\Gamma(N,M)) = \bigoplus_{\chi \bmod{N}} S_k(\Gamma(N,M),\chi).
\end{equation*}
Here ``$\chi \bmod{N}$" in the right hand side means that $\chi$ runs through all the Dirichlet characters modulo $N$. 

The following properties can be easily checked.
\begin{enumerate}
    \item Let $\chi, \widetilde{\chi}$ be Dirichlet characters modulo $N,MN$ respectively. Assume that $\widetilde{\chi} \ (\bmod{N})$ coincides with $\chi$. In this case, the following holds:
    \begin{equation*}
        S_k(\Gamma(N,M), \chi) = S_k(\Gamma_1(MN),\widetilde{\chi}).
    \end{equation*}
    \item Let $\chi, \widetilde{\chi}$ be Dirichlet characters modulo $N,MN$ respectively. Assume that $\widetilde{\chi} \ (\bmod{N})$ coincides with $\chi$. Let $T_p$ be the Hecke operator on $S_k(\Gamma(N, M), \chi)$, and $T_p'$ be the Hecke operator on $S_k(\Gamma_1(MN), \widetilde{\chi})$. Then, we have $T_{p^r} = T_{p^r}'$ for every $r \geq 1$.
    \item We have the following equation for every $r \geq 1$.
    \begin{equation}\label{sum_ch}
    \mathrm{Tr} (\langle d \rangle T_{p^r} \mid S_k(\Gamma(N,M))) =
    \sum_{\chi \bmod{N}} \chi(d) \mathrm{Tr}(T_{p^r} \mid S_k(\Gamma(N,M),\chi)).\\
\end{equation}
\end{enumerate}

The following formula is called the Eichler--Selberg trace formula.
\begin{thm}[\textbf{Eichler--Selberg trace formula for $\Gamma_1(N)$} {\cite[p.370,  Statement of the final result]{Knightly and Li}}]\label{trace_gamma_0}
Let $(q,N)=1$, $k \geq 2$. Let $\chi$ be a Dirichlet character modulo $N$ with $\chi(-1) = (-1)^k$. We have the following formula.
\begin{equation}
\begin{split}
\mathrm{Tr} (T_q \mid S_k(\Gamma_1(N),\chi)) = &\frac{k-1}{12} \psi(N) \chi(q^{1/2})q^{k/2-1} \\
& -\frac{1}{2} \sum_{t^2 < 4q} U_{k-2}(t,q) \sum_{m^2 \mid (t^2-4q)} h_w \left( \frac{t^2-4q}{m^2} \right) \mu_\chi(t,m,q) \\
& -\frac{1}{2} \sum_{b\mid q} \min(b,q/b)^{k-1} \sum_\tau(\varphi((\tau,N/\tau))\chi(y_\tau)) \\
& + \delta(k,2) 1_{\chi = 1} \sum_{\substack{c \mid q \\ (N,q/c)=1}} c
\end{split}
\end{equation}
Here the terms in the right hand side are defined as follows.
\begin{itemize}
    \item If $q$ is not a perfect square, let $\chi(q^{1/2})=0$.
    \item $U_{k-2}(t,q)$ be the normalized Chebyshev polynomial (see Section $1$).
    \item $\mu_\chi (t,m,q)$ is defined by
    \begin{equation*}
        \mu_\chi (t,m,q) := \frac{\psi(N)}{\psi(N/(N,m))} \sum_{c \in S(N,m,t,q)} \chi(c).
    \end{equation*}
    Here the sum in the right hand side means 
    $c$ runs through all elements of $(\mathbb{Z}/N\mathbb{Z})^\times$ such that there exists a lift $\tilde{c} \ (\bmod{N(N,m)})$ of $c$ satisfying
    \begin{equation*}
        \tilde{c}^2-t\tilde{c}+q \equiv 0 \ (\bmod{N(N,m)})
    \end{equation*}
    (for the set $S(N,m,t,q))$, see Section $2$). 
    \item $\tau$ runs through all positive factors of $N$ such that $(\tau,N/\tau)$ divides $N/\mathrm{cond}(\chi)$ and $b-q/b$, where $\mathrm{cond}(\chi)$ denotes the conductor of $\chi$.
    \item $y_\tau$ is the unique element of $\mathbb{Z}/(N/(\tau,N/\tau))\mathbb{Z}$ such that $y_\tau \equiv d \ (\bmod{\tau})$ and $y_\tau \equiv q/d \ (\bmod(N/\tau))$.
    \item $1_{\chi=1}=1$ if $\chi$ is the trivial character, and $1_{\chi=1} = 0$ otherwise.
\end{itemize}
\end{thm}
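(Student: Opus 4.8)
The plan is to obtain this statement as a transcription of the classical Eichler--Selberg trace formula, in the explicit shape recorded by Knightly and Li \cite[p.370]{Knightly and Li}, into the notation fixed in Sections 1 and 2; no new mathematical input is needed beyond that reference. Recall that the trace of $T_q$ on the space of cusp forms, realized through a kernel function on the upper half-plane, decomposes according to the type of the contributing conjugacy classes: a scalar (identity) contribution, an elliptic contribution carrying the class numbers of imaginary quadratic orders, a hyperbolic contribution indexed by the factorizations of $q$, and a parabolic (``dual'') contribution present only in weight $2$. Since $q$ is coprime to $N$ by hypothesis, one is in the range where this formula applies verbatim, and what remains is to match the four pieces term by term, keeping track of $S_k(\Gamma(N,M),\chi)=S_k(\Gamma_1(MN),\widetilde{\chi})$ and of the various normalizations.

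Concretely, for the identity term \cite{Knightly and Li} gives a nonzero contribution only when $q$ is a perfect square, equal to $\frac{k-1}{12}[\mathrm{SL}_2(\mathbb{Z}):\Gamma_0(N)]\,\chi(q^{1/2})\,q^{k/2-1}$; since $[\mathrm{SL}_2(\mathbb{Z}):\Gamma_0(N)]=\psi(N)$ and we have set $\chi(q^{1/2})=0$ for $q$ non-square, this is the first line. For the elliptic term the sum runs over integers $t$ with $t^2<4q$ and, for each such $t$, over $m$ with $m^2\mid t^2-4q$; the factor $h_w\!\left((t^2-4q)/m^2\right)$ is the weighted class number (with the $\tfrac13,\tfrac12$ corrections at discriminants $-3,-4$) of the order of conductor $m$ in $\mathbb{Q}(\sqrt{t^2-4q})$, and the local factor at $N$ is the $\chi$-twisted count of solutions of $c^2-tc+q\equiv 0$ modulo $N(N,m)$, which is exactly $\mu_\chi(t,m,q)$ built from the set $S(N,m,t,q)$ of Section 2; together with the archimedean factor $U_{k-2}(t,q)$, normalized as in Section 1, this reproduces the second line including the sign $-\tfrac12$. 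For the hyperbolic term the contribution is indexed by divisors $b\mid q$, with weight $\min(b,q/b)^{k-1}$, and the arithmetic factor is the sum over divisors $\tau$ of $N$ (subject to the stated divisibility constraints on $(\tau,N/\tau)$) of $\varphi((\tau,N/\tau))\,\chi(y_\tau)$, where $y_\tau$ is obtained by the Chinese remainder theorem from $d$ mod $\tau$ and $q/d$ mod $N/\tau$; this is the third line. Finally the parabolic term survives only when $k=2$ and $\chi$ is trivial, where it equals $\sum_{c\mid q,\ (N,q/c)=1}c$; this is the last line.

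The only point requiring care is the reconciliation of conventions: one must check that the Chebyshev normalization of $U_{k-2}(t,q)$, the weighting in $h_w$, the index and measure conventions hidden in the prefactor $\psi(N)$, and — most delicately — the local density $\mu_\chi(t,m,q)$ defined through $S(N,m,t,q)$ all agree with the corresponding quantities in \cite{Knightly and Li}. The hypothesis $\chi(-1)=(-1)^k$ is precisely the condition under which $S_k(\Gamma_1(N),\chi)$ can be nonzero, so outside it all terms vanish and there is nothing to prove. Thus the expected ``obstacle'' is purely bookkeeping, to be handled by the term-by-term comparison above, with the underlying identity cited from \cite[p.370]{Knightly and Li}.
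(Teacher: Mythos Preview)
Your proposal is correct and matches the paper's treatment: Theorem~\ref{trace_gamma_0} is not proved in the paper at all but is simply quoted as a known result from Knightly--Li \cite[p.370]{Knightly and Li}, so the only content is the notational transcription you describe. Your term-by-term identification of the identity, elliptic, hyperbolic, and dual contributions with the paper's conventions is exactly what is needed, and nothing further is required.
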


 Kaplan--Petrow calculated the right hand side of (\ref{sum_ch}) by using Theorem \ref{trace_gamma_0} and proved the following theorem.
\begin{thm}[\textbf{Eichler--Selberg trace formula for $\Gamma(N,M)$} {\cite[Theorem 9]{Kaplan and Petrow:2017}}]\label{trace_gamma_N,M}
    Let $p$ be a prime, $q$ be a power of $p$, $M,N \geq 1$ be positive integers, $d \in (\mathbb{Z}/N\mathbb{Z})^{\times}$, and $k \geq 2$ be an integer. Assume that $M \mid N$, $(N,q)=1$, and $d^2 q \equiv 1 \ (\bmod{M})$. Let $T_q$ be the Hecke operator on $S_k(\Gamma(N,M))$ and $\langle d \rangle$ be the diamond operator on $S_k(\Gamma(N,M))$.
    Let $L=(d^2 q -1,N)$. Then, we have the following formula.
    \begin{equation*}
\begin{split}
    T_{\mathrm{trace}}(N,M) &= \mathrm{Tr}(T_q \langle d \rangle \mid S_k(\Gamma(N,M)))\\
    &=  T_{\mathrm{id}}(N,M) - T_{\mathrm{ell}}(N,M) - T_{\mathrm{hyp}}(N,M) + T_{\mathrm{dual}}(N,M),
\end{split}
\end{equation*}
where $T_{\mathrm{id}}, T_{\mathrm{ell}}, T_{\mathrm{hyp}}, T_{\mathrm{dual}}$ are respectively defined by  
\begin{align*}
    T_{\mathrm{id}}(N,M)&:= \varphi(N) \cdot\frac{k-1}{24}q^{k/2-1} \psi(NM) (\delta_N (q^{1/2}d,1) + (-1)^k \delta_N(q^{1/2}d,-1)),\\
    T_{\mathrm{ell}}(N,M) &:= \varphi(N) \cdot \frac{\psi(N^2)}{\psi(N^2/M^2)} \\
    & \quad \cdot \sum_{\Lambda \mid (L/M)} \frac{\varphi(\Lambda^2)\varphi(N/(M\Lambda))}{\varphi(N/M)} \sum_{t^2 < 4q} U_{k-2}(t,q) H_{N,\Lambda M}(t,q,d), \\
    T_{\mathrm{hyp}}(N,M) &:= \varphi(N) \cdot \frac{1}{4} \sum_{b \mid q} \min(b,q/b)^{k-1} \sum_{\substack{\tau \mid NM \\ g \mid (b-q/b)}}\bigg( \frac{\varphi(g) \varphi(N(M,g)/g)}{\varphi(N)}\\
            &\quad \times (\delta_{N(M,g)/g} (y_{\tau}d,1) + (-1)^k \delta_{N(M,g)/g}(y_\tau d ,-1)) \bigg),\\
    T_{\mathrm{dual}}(N,M) &:=\sigma(q)\delta(k,2).
\end{align*}
\end{thm}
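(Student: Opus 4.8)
The plan is to deduce this statement from the Eichler--Selberg trace formula for $\Gamma_1$ (Theorem \ref{trace_gamma_0}) by passing to the $\chi$-eigenspace decomposition and then evaluating the resulting sum over Dirichlet characters. First I would combine $S_k(\Gamma(N,M)) = \bigoplus_{\chi \bmod N} S_k(\Gamma(N,M),\chi)$ with the identity \eqref{sum_ch} to write $T_{\mathrm{trace}}(N,M) = \sum_{\chi \bmod N} \chi(d)\,\mathrm{Tr}(T_q \mid S_k(\Gamma(N,M),\chi))$. By property (1) above, $S_k(\Gamma(N,M),\chi)$ equals $S_k(\Gamma_1(MN),\widetilde{\chi})$ for $\widetilde{\chi}$ the character modulo $MN$ induced by $\chi$, and by property (2) the Hecke operator $T_q$ agrees on the two sides. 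Since the right-hand side of Theorem \ref{trace_gamma_0} vanishes unless $\widetilde{\chi}(-1)=(-1)^k$, and $\widetilde{\chi}(-1)=\chi(-1)$, only characters $\chi \bmod N$ of parity $(-1)^k$ contribute. Applying Theorem \ref{trace_gamma_0} with level $MN$ and nebentypus $\widetilde{\chi}$ to each summand then writes $T_{\mathrm{trace}}(N,M)$ as a sum over such $\chi$ of four pieces — an identity piece, a class-number (elliptic) piece, a hyperbolic piece, and the dual piece — and it remains to evaluate each of these character sums.

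The dual and identity pieces are immediate. The dual term contains the factor $1_{\widetilde{\chi}=1}$, so only the trivial character survives; since $(N,q)=1$ forces $(MN,q)=1$, the condition $(MN,q/c)=1$ is vacuous, and the term collapses to $\delta(k,2)\sum_{c\mid q}c = \sigma(q)\delta(k,2) = T_{\mathrm{dual}}(N,M)$. For the identity term I would replace the restriction to characters of parity $(-1)^k$ by inserting the projector $\tfrac12\bigl(1+(-1)^k\chi(-1)\bigr)$ and summing over all $\chi \bmod N$, then apply orthogonality $\sum_{\chi \bmod N}\chi(x)=\varphi(N)\delta_N(x,1)$; since $\widetilde{\chi}$ is induced from $\chi$ one has $\widetilde{\chi}(q^{1/2})=\chi(q^{1/2})$, so the two halves of the projector produce $\delta_N(q^{1/2}d,1)$ and $(-1)^k\delta_N(q^{1/2}d,-1)$, and together with the constant $\tfrac{k-1}{12}\psi(MN)q^{k/2-1}$ this is exactly $T_{\mathrm{id}}(N,M)$ (with $q^{1/2}$-terms read as zero when $q$ is not a square).

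For the hyperbolic term one must evaluate, for each $b \mid q$, the sum $\sum_{\chi}\chi(d)\sum_{\tau}\varphi(g_\tau)\,\widetilde{\chi}(y_\tau)$, where $\tau$ runs over divisors of $MN$ with $g_\tau:=(\tau,MN/\tau)$ dividing both $MN/\mathrm{cond}(\widetilde{\chi})$ and $b-q/b$, and $y_\tau$ is built from $d$ and $q/d$ by the Chinese remainder theorem. The plan is to interchange the summations, fix $\tau$ and $g:=g_\tau$, group the characters by the conditions $\mathrm{cond}(\widetilde{\chi})\mid MN/g$ and $\mathrm{cond}(\widetilde{\chi})\mid N$, apply orthogonality on the appropriate modulus, and insert the parity projector as before; after accounting for the descent from modulus $MN$ to modulus $N$ this yields the factor $\varphi(g)\varphi(N(M,g)/g)\bigl(\delta_{N(M,g)/g}(y_\tau d,1)+(-1)^k\delta_{N(M,g)/g}(y_\tau d,-1)\bigr)$ and reproduces $T_{\mathrm{hyp}}(N,M)$. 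This is essentially a bookkeeping computation in cyclotomic character sums.

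The elliptic term is where the real work lies. From Theorem \ref{trace_gamma_0} its contribution is $-\tfrac12\sum_{t^2<4q}U_{k-2}(t,q)\sum_{m^2\mid(t^2-4q)}h_w\!\bigl(\tfrac{t^2-4q}{m^2}\bigr)\mu_{\widetilde{\chi}}(t,m,q)$ with $\mu_{\widetilde{\chi}}(t,m,q)=\tfrac{\psi(MN)}{\psi(MN/(MN,m))}\sum_{c\in S(MN,m,t,q)}\widetilde{\chi}(c)$. Summing $\chi(d)\mu_{\widetilde{\chi}}(t,m,q)$ over characters of parity $(-1)^k$ and applying orthogonality modulo $N$ turns the inner character sum into a count of those $c\in S(MN,m,t,q)$ whose reduction modulo $N$ is $d^{-1}$, together with a mirror count for $-d^{-1}$ coming from the parity projector. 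By Lemma \ref{Dq+D^-1} such a $c$ forces $t$ to be congruent to the quantity $dq+d^{-1}$ modulo the relevant prime powers, so membership in $S(MN,m,t,q)$ unwinds into a product of the congruence-indicator functions $D(t;n)$; this local analysis has to be carried out prime-by-prime, and the prime $p$ (when $p\mid N$) needs separate treatment because no lift modulo a power of $p$ is available. Finally one repackages the double sum over $m$ with $m^2\mid t^2-4q$ and over the level of ramification: writing $\Lambda M$ for the exact ramified part, an inclusion--exclusion over full divisors $m\mid\mid N$, with signs recorded by the Liouville function $\lambda$, converts $\sum_{m^2\mid(t^2-4q)}h_w$ into the Hurwitz class numbers $H\!\bigl(\tfrac{t^2-4q}{(\Lambda M)^2}\bigr)$ weighted by the functions $D_{N,\mu}(t)$ — that is, into $H_{N,\Lambda M}(t,q,d)$ of Definition \ref{H_n_1,n_2} — while the $\psi$- and $\varphi$-factors collapse, via the multiplicativity of $\psi$ and $\varphi$ and the convolution identity $\sum_{d\mid n}\varphi(d^2)\phi(n/d)=\delta(n,1)$, into the prefactor $\varphi(N)\tfrac{\psi(N^2)}{\psi(N^2/M^2)}\sum_{\Lambda\mid(L/M)}\tfrac{\varphi(\Lambda^2)\varphi(N/(M\Lambda))}{\varphi(N/M)}$. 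I expect this last regrouping — making the $S(MN,m,t,q)$-combinatorics and the Liouville inclusion--exclusion line up precisely with the definition of $H_{n_1,n_2}$, and checking every $\psi/\varphi$ normalization — to be the main obstacle; it follows the strategy of \cite[Section 4]{Kaplan and Petrow:2017}.
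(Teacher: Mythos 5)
Your outline is essentially the paper's own route: the paper does not reprove this statement but cites Kaplan--Petrow, whose proof is exactly what you describe --- sum the $\Gamma_1(MN)$ trace formula (Theorem \ref{trace_gamma_0}) against $\chi(d)$ over characters of parity $(-1)^k$ via \eqref{sum_ch}, using the identifications $S_k(\Gamma(N,M),\chi)=S_k(\Gamma_1(MN),\widetilde{\chi})$, and then evaluate the identity, dual, hyperbolic, and elliptic character sums, the elliptic one by the $W_{N,M,m}(d)$-type orthogonality count and Liouville inclusion--exclusion that the paper itself reproduces in Section \ref{secMF} for $\Gamma(p^rN,M)$. The proposal is therefore correct in approach; the only superfluous point is your caveat about treating a prime $p\mid N$ separately, which cannot arise here since $(N,q)=1$.
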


\section{The Eichler--Selberg trace formula for $\Gamma(p^rN,M)$}\label{secMF}
The Eichler--Selberg trace formula in the case where $(N, q) = 1$ is given in Theorem \ref{trace_gamma_N,M} (see \cite[Theorem 9]{Kaplan and Petrow:2017}). In this section, we obtain the Eichler--Selberg trace formula for the congruence subgroup $\Gamma(p^rN, M)$ of $\mathrm{SL}_2(\mathbb{Z})$ using the methods of Kaplan and Petrow.
Trace formula no kantanna senkoukenyuu wo matomeru. Selberg, Hijikata, Oesterle, Cohen no syoukai
\subsection{Eichler--Selberg trace formula for $\Gamma(p^rN,M), r \geq 1$}
\begin{thm}[\textbf{Eichler--Selberg trace formula for $\Gamma(p^rN,M)$}]\label{trace_gamma_p^rN,M}
     Let $r,M,N \geq 1$ be positive integers, $p$ be a prime and $q$ be a power of $p$. Let $d \in (\mathbb{Z}/p^rN\mathbb{Z})^{\times}$ and $k \geq 2$ be an integer. Assume that $M \mid N$, $(N,q)=1$, and $d^2 q \equiv 1 \ (\bmod{M})$. Let $T_q$ be the Hecke operator on $S_k(\Gamma(p^rN,M))$, $\langle d \rangle$ be the diamond operator on $S_k(\Gamma(p^rN,M))$.
     Let $L=(d^2 q -1,N)$. Then, we have the following formula.
    \begin{equation*}
\begin{split}
    T_{\mathrm{trace}}(p^rN,M) &= \mathrm{Tr}(T_q \langle d \rangle \mid S_k(\Gamma(p^rN,M)))\\
    &=  T_{\mathrm{id}}(p^rN,M) - T_{\mathrm{ell}}(p^rN,M) - T_{\mathrm{hyp}}(p^rN,M) + T_{\mathrm{dual}}(p^rN,M),
\end{split}
\end{equation*}
where $T_{\mathrm{id}}, T_{\mathrm{ell}}, T_{\mathrm{hyp}}, T_{\mathrm{dual}}$ are respectively defined by  
\begin{align*}
    T_{\mathrm{id}}(p^rN,M)&:= \varphi(p^rN) \cdot\frac{k-1}{24}q^{k/2-1} \psi(p^rNM) (\delta_{p^rN} (q^{1/2}d,1) + (-1)^k \delta_{p^rN}(q^{1/2}d,-1)),\\
    T_{\mathrm{ell}}(p^rN,M) &:= \varphi(p^rN) \cdot \frac{\psi(p^rN^2)}{\psi(p^rN^2/M^2)}\sum_{\Lambda \mid (L/M)} \frac{\varphi(\Lambda^2)\varphi(p^rN/(M\Lambda))}{\varphi(p^rN/M)} \\
    &\quad \sum_{t^2 < 4q} U_{k-2}(t,q) H_{p^rN,\Lambda M}(t,q,d), \\
    T_{\mathrm{hyp}}(p^rN,M) &:= \varphi(p^rN) \cdot \frac{1}{4} \sum_{b \mid q} \min(b,q/b)^{k-1} \sum_{\substack{\tau \mid p^rNM \\ g \mid (b-q/b)}}\bigg( \frac{\varphi(g) \varphi(p^rN(M,g)/g)}{\varphi(N)}\\
            &\quad \times (\delta_{p^rN(M,g)/g} (y_{\tau}d,1) + (-1)^k \delta_{p^rN(M,g)/g}(y_\tau d ,-1)) \bigg),\\
    T_{\mathrm{dual}}(p^rN,M) &:=\sigma(q)\delta(k,2).
\end{align*}
\end{thm}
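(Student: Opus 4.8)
The plan is to reduce the trace formula for $\Gamma(p^rN,M)$ to the already-established formula for $\Gamma(N,M)$ in Theorem \ref{trace_gamma_N,M} by running the same eigenspace/character argument as Kaplan--Petrow, but now tracking the extra $p$-power in the level. First I would record the eigenspace decomposition $S_k(\Gamma(p^rN,M)) = \bigoplus_{\chi \bmod p^rN} S_k(\Gamma(p^rN,M),\chi)$ together with the identification $S_k(\Gamma(p^rN,M),\chi) = S_k(\Gamma_1(p^rNM),\widetilde\chi)$ for a lift $\widetilde\chi$ of $\chi$ modulo $p^rNM$, and the compatibility of Hecke operators, exactly as in properties (1)--(3) in Section 4. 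By \eqref{sum_ch} we then have
\[
T_{\mathrm{trace}}(p^rN,M) = \sum_{\chi \bmod p^rN} \chi(d)\,\mathrm{Tr}(T_q \mid S_k(\Gamma_1(p^rNM),\widetilde\chi)),
\]
and we may insert the Eichler--Selberg trace formula for $\Gamma_1$ (Theorem \ref{trace_gamma_0}) with level $p^rNM$ into each summand. The key point is that since $(q,p^rN)=1$ forces $q$ coprime to the level $p^rNM$, every hypothesis of Theorem \ref{trace_gamma_0} is met, so the four-term decomposition is available term by term.

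Next I would carry out the character sum $\sum_{\chi \bmod p^rN}\chi(d)(\cdots)$ on each of the four pieces (identity, elliptic, hyperbolic, dual) separately. For the identity term, summing $\chi(q^{1/2}d)$ over all characters modulo $p^rN$ produces $\varphi(p^rN)$ times the indicator $\delta_{p^rN}(q^{1/2}d,1)$, together with the parity contribution $(-1)^k\delta_{p^rN}(q^{1/2}d,-1)$ coming from the constraint $\chi(-1)=(-1)^k$; this yields $T_{\mathrm{id}}(p^rN,M)$ verbatim. For the dual term, only the trivial character survives the $\delta(k,2)1_{\chi=1}$ factor, and the divisor sum $\sum_{c\mid q,(p^rN,q/c)=1}c$ collapses to $\sigma(q)$ because $(p^rN,q)=1$ makes the coprimality condition vacuous; hence $T_{\mathrm{dual}}(p^rN,M)=\sigma(q)\delta(k,2)$. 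The elliptic and hyperbolic terms are the substantive ones: here one must interchange the $\chi$-sum with the sums over $t$, over $m$ (resp.\ over $\tau$, $b$, $g$), and regroup the resulting character sums over residues mod $p^rN(p^rN,m)$ into the quantities $\mu_\chi$ and $y_\tau$. This is precisely the bookkeeping Kaplan--Petrow perform for $\Gamma(N,M)$; the only change is that the modulus $N$ is replaced throughout by $p^rN$, and since $p \nmid M$ and $(p^rN,q)=1$, the $p$-part and the prime-to-$p$ part of all the arithmetic functions $\varphi,\psi,\sigma$ and of the congruence conditions split multiplicatively, so the formal manipulations go through unchanged.

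Concretely, for the elliptic term the character orthogonality over $\chi \bmod p^rN$ converts $\sum_\chi \chi(d)\mu_\chi(t,m,q)$ into $\varphi(p^rN)$ times a count of solutions $c \in S(p^rN,m,t,q)$ reducing to $d$, which after summing over the full divisors $m$ and the nested divisors $\mu$ repackages exactly into $\sum_{\Lambda\mid(L/M)}\varphi(\Lambda^2)\varphi(p^rN/(M\Lambda))/\varphi(p^rN/M)$ times $\sum_{t^2<4q}U_{k-2}(t,q)H_{p^rN,\Lambda M}(t,q,d)$, with the leading ratio $\psi(p^rN^2)/\psi(p^rN^2/M^2)$ emerging from the $\psi(N)/\psi(N/(N,m))$ factors in the definition of $\mu_\chi$; note $L=(d^2q-1,N)$ is unchanged because $d^2q\equiv 1\pmod M$ and the new factor $p^r$ contributes nothing to the gcd given $(q,p)=1$ need not hold—rather, one checks the $p$-part of $d^2q-1$ is irrelevant since $\Lambda$ is constrained to divide $L/M$ and $p \nmid N$. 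The hyperbolic term is handled by the identical reorganization of Kaplan--Petrow's Section 4 computation with $N$ replaced by $p^rN$, the sum over $\tau \mid p^rNM$ and $g=(\tau,p^rNM/\tau)$, yielding $T_{\mathrm{hyp}}(p^rN,M)$ as stated (with the denominator $\varphi(N)$ inherited verbatim from their formula).

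\medskip

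The main obstacle I anticipate is not conceptual but combinatorial: faithfully re-deriving the elliptic-term repackaging — the passage from $\sum_\chi\chi(d)\sum_m(\cdots)\mu_\chi$ to the $\Lambda$-sum over $H_{p^rN,\Lambda M}$ — while making sure that introducing the prime $p$ into the level (which, unlike every prime dividing $N$ in the Kaplan--Petrow setting, also divides $q$) does not break any step. The reassuring structural fact is that all the relevant conditions ($c^2-tc+q\equiv 0$, the lift conditions in $S(p^rN,m,t,q)$, the definition of $D(t;n)$ and $D_{\nu,\mu}(t)$) are insensitive to the $p$-adic valuation of $q$ precisely because the congruences are taken modulo divisors of $p^rN$ with $p \nmid N$ on the one side and modulo divisors of $n_2=M$ with $p \nmid M$ on the other; thus the $p$-part simply rides along as the factor $D(t;p^r)$ (cf.\ Lemma \ref{H_p^rn_1,n_2(t,q,1)}) and never interacts with the modular-forms side beyond changing $N$ to $p^rN$. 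Once that is verified, assembling the four terms gives the claimed identity.
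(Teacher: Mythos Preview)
Your overall strategy---decompose into eigenspaces, insert the Eichler--Selberg formula for $\Gamma_1(p^rNM)$, sum over characters $\chi \bmod p^rN$, and handle the four terms separately---is exactly what the paper does. But there are two genuine problems.

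First, you repeatedly assert that $(q,p^rN)=1$ (``since $(q,p^rN)=1$ forces $q$ coprime to the level'', and for the dual term ``$(p^rN,q)=1$ makes the coprimality condition vacuous''). This is false: $q$ is a power of $p$ and $r \geq 1$, so $p \mid (q,p^rN)$. You seem to notice this later (``$(q,p)=1$ need not hold''), but the steps already built on the false hypothesis do not stand; in particular your dual-term argument is incorrect as written, since the condition $(p^rMN,q/c)=1$ is \emph{not} vacuous.

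Second, and more substantively, you describe the elliptic-term work as ``combinatorial bookkeeping'' and say the $p$-part ``simply rides along as the factor $D(t;p^r)$'', citing Lemma~\ref{H_p^rn_1,n_2(t,q,1)}. That lemma is a statement about the \emph{definition} of $H_{p^rn_1,n_2}$; it says nothing about why the character sum on the trace-formula side factors the same way. The actual new content of the paper's proof is Lemma~\ref{W_p^rN,1,m(d)}: one must show $W_{p^r,1,m}(d)=D(t;p^r)$, i.e., count solutions $c \in (\mathbb{Z}/p^r\mathbb{Z})^\times$ of $c^2 - tc + q \equiv 0 \pmod{p^r}$ with $c \equiv d^{-1}$, in the situation $p \mid q$. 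This requires a three-way case split ($p \mid t$; $p \nmid t$ with $p^r \mid q$; $p \nmid t$ with $p^r \nmid q$), and the last case needs a bijectivity argument for $i \mapsto qi^2+ti+1$ on $\mathbb{Z}/(p^r/q)\mathbb{Z}$ to isolate the unique solution. Your claim that the congruence conditions ``are insensitive to the $p$-adic valuation of $q$'' is precisely what is \emph{not} obvious: the comparison of $v_p(q)$ with $r$ is exactly what governs the case analysis. Only after Lemma~\ref{W_p^rN,1,m(d)} does CRT give $C_{p^rN,M}=D(t;p^r)\,C_{N,M}$ (Lemma~\ref{C_1,p^rN,M(t,q,d)}), at which point Kaplan--Petrow's formula for $C_{N,M}$ and Lemma~\ref{H_p^rn_1,n_2(t,q,1)} combine to yield $T_{\mathrm{ell}}(p^rN,M)$. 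Without this lemma, your elliptic-term argument has a gap.
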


\subsection{Proof of Theorem \ref{trace_gamma_p^rN,M} (Step 1)}
By Theorem \ref{trace_gamma_0}, for a Dirichlet character $\chi$ modulo $p^rN$, we have
\begin{equation}\label{T_qS_k(Gamma(p^rMN,chi))}
    \mathrm{Tr}(T_q \mid S_k(\Gamma_1 (p^rMN),\chi)) = T_\chi^{(i)} - T_\chi^{(e)} - T_\chi^{(h)} + T_\chi^{(d)}.
\end{equation}
Here, for an integer $t$ with $t^2- 4q < 0$, we define $T_{\chi}^{(e)}(t)$ by
\[ T_\chi^{(e)}(t) := \sum_{m^2 \mid (t^2-4q)} h_w \left(\frac{t^2-4q}{m^2}\right)\mu_\chi(t,m,q),\]
and $T_{\chi}^{(i)},T_{\chi}^{(e)},T_{\chi}^{(h)},T_{\chi}^{(d)}$ by
\begin{align*}
    T_\chi^{(i)} &:= \frac{k-1}{12} \psi(p^rMN)\chi(q^{1/2})q^{k/2 -1},\\
    T_\chi^{(e)} &:= \frac{1}{2}\sum_{t^2<4q}U_{k-2}(t,q)T_{\chi}^{(e)}(t),\\
    T_\chi^{(h)} &:= \frac{1}{2} \sum_{b \mid q}\min (b,q/b)^{k-1}\sum_{\tau}\varphi((\tau,p^rMN/\tau))\chi(y_\tau),\\
    T_\chi^{(d)} &:= \delta (k,2) 1_{\chi=1} \sum_{\substack{c \mid q \\ (p^rMN,q/c) = 1}} c.
\end{align*}

If $\chi(-1) \ne (-1)^k$, we have $S_k(\Gamma(p^rN,M),\chi) = \{0\}$.
Therefore, 
by (\ref{sum_ch}) and (\ref{T_qS_k(Gamma(p^rMN,chi))}), we have
\begin{equation}
    \frac{1}{\varphi(p^rN)} \mathrm{Tr} (\langle d \rangle T_q \mid S_k(\Gamma(p^rN,M))) = T^{(i)} - T^{(e)} - T^{(h)} + T^{(d)}.
\end{equation}
Here, we define $T^{(i)},T^{(e)},T^{(h)},T^{(d)}$ by
\begin{align}
    T^{(i)} &:= \frac{1}{\varphi(p^rN)} \sum_{\substack{\chi \bmod{p^rN} \\ \chi(-1) = (-1)^k}} \chi(d) T_{\chi}^{(i)},\\
    T^{(e)} &:= \frac{1}{\varphi(p^rN)} \sum_{\substack{\chi \bmod{p^rN} \\ \chi(-1) = (-1)^k}} \chi(d) T_{\chi}^{(e)}, \label{T^{(e)}}\\
    T^{(h)} &:= \frac{1}{\varphi(p^rN)} \sum_{\substack{\chi \bmod{p^rN} \\ \chi(-1) = (-1)^k}} \chi(d) T_{\chi}^{(h)},\\
    T^{(d)} &:= \frac{1}{\varphi(p^rN)} \sum_{\substack{\chi \bmod{p^rN} \\ \chi(-1) = (-1)^k}} \chi(d) T_{\chi}^{(d)}.
\end{align}

It is easy to see that $\varphi(p^rN)T^{(i)}, \varphi(p^rN)T^{(h)}, \varphi(p^rN)T^{(d)}$ coincide with the terms $T_{\mathrm{id}}(p^rN,M), T_{\mathrm{hyp}}(p^rN,M), T_{\mathrm{dual}}(p^rN,M)$ in Theorem \ref{trace_gamma_p^rN,M} (for the proof, see \cite[Section 4.1, 4.2, 4.4]{Kaplan and Petrow:2017}, respectively).
In this paper, we only calculate the elliptic term $T^{(e)}$.

\subsection{Proof of Theorem \ref{trace_gamma_p^rN,M}  (Step 2): Calculation of the elliptic term}\label{Ellitpic_Term}
In the following, we put $\Delta := t^2 -4q$. Note that we assume $M \mid N$, $d \in (\mathbb{Z} /p^rN \mathbb{Z})^{\times}$, and $d^2 q \equiv 1 \ (\bmod{M})$. 

We put
\begin{equation*}
    T^{(e)} (t):=\frac{1}{\varphi(p^rN)} \sum_{\substack{\chi \bmod{p^rN} \\ \chi(-1) = (-1)^k}} \chi (d)T_{\chi}^{(e)}(t).
\end{equation*}
Changing the order of summation, we have
\begin{equation}\label{T^{(e)}(t)}
    T^{(e)} (t) = \sum_{m^2 \mid \Delta} h_w \left( \frac{\Delta}{m^2} \right) \frac{1}{\varphi(p^rN)} \sum_{\substack{\chi \bmod{p^rN} \\ \chi(-1) = (-1)^k}} \chi(d) \mu_{\chi} (t,m,q).
\end{equation}

We put
\begin{align}\label{def:W_N_M_n}
    W_{N,M,m}(d) &:= \sum_{c \in S(MN,m,t,q)} \delta_N (c,d^{-1})\\
    C_{N,M}(t,q,d) &:= \sum_{m^2 \mid \Delta} h_w \left( \frac{\Delta}{m^2} \right) \frac{\psi(MN)}{\psi(MN/(MN,m))}W_{N,M,m}(d).
\end{align}
Then, we have
\begin{equation*}
\begin{split}
    &\frac{1}{\varphi(p^rN)} \sum_{\substack{\chi \bmod{p^rN} \\ \chi(-1) = (-1)^k}} \chi(d) \mu_{\chi} (t,m,q) \\
    &= \frac{\psi(p^rMN)}{\psi(p^rMN/(p^rMN,m))} \sum_{c \in S(p^rMN,m,t,q)} \frac{1}{\varphi(p^rN)} \sum_{\substack{\chi \bmod{p^rN} \\ \chi(-1) = (-1)^k}} \chi(dc)\\
    &= \frac{\psi(p^rMN)}{\psi(p^rMN/(p^rMN,m))} \dfrac{1}{2} \left( W_{p^rN,M,m}(d) + (-1)^k W_{p^rN,M,m}(-d) \right).
\end{split}
\end{equation*}
Thus, the equation (\ref{T^{(e)}(t)}) can be rewritten as follows.
\begin{equation*}
    T^{(e)}(t) = \frac{1}{2} \left( C_{p^rN,M}(t,q,d) + (-1)^k C_{p^rN,M}(t,q,-d) \right)
\end{equation*}

Since \( W_{p^r,1,m}(d) \) will be used in the calculation of $C_{p^rN,M}(t,q,d)$, we organize the results.
\begin{lem}\label{W_p^rN,1,m(d)}
    Let $d \in (\mathbb{Z} /p^r \mathbb{Z})^{\times}$ and $m \in \mathbb{Z}$ with $m^2 \mid (t^2 -4q)$. We have
    \begin{equation*}
        W_{p^r,1,m}(d) = D(t;p^r).
    \end{equation*}
\end{lem}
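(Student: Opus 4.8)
The plan is to unwind the definitions of $W_{p^r,1,m}(d)$ and $D(t;p^r)$ and reduce the claim to a statement about the set $S(p^r,m,t,q)$. By definition, $W_{p^r,1,m}(d) = \sum_{c \in S(p^r,m,t,q)} \delta_{p^r}(c,d^{-1})$, so the sum collapses to $1$ if $d^{-1} \bmod p^r$ lies in $S(p^r,m,t,q)$ and to $0$ otherwise. Thus I must show that $d^{-1} \in S(p^r,m,t,q)$ if and only if $D(t;p^r) = 1$, i.e. if and only if $\widetilde{d}q + \widetilde{d}^{-1} \equiv t \pmod{p^r}$ for a (equivalently, any) lift $\widetilde{d}$ of $d$ modulo $p^r n_2$ — but since here the modulus is just $p^r$ and $(n_2,p)=1$, this is simply the condition $dq + d^{-1} \equiv t \pmod{p^r}$.

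First I would spell out membership in $S(p^r,m,t,q)$: an element $c \in (\mathbb{Z}/p^r\mathbb{Z})^\times$ lies in $S(p^r,m,t,q)$ iff there is a lift $\tilde c \in (\mathbb{Z}/p^r(p^r,m)\mathbb{Z})^\times$ with $\tilde c^2 - t\tilde c + q \equiv 0 \pmod{p^r(p^r,m)}$. Taking $c = d^{-1}$, multiply the congruence $\tilde c^2 - t \tilde c + q \equiv 0$ by $\tilde c^{-1}$ (a unit) to rewrite it as $\tilde c + q \tilde c^{-1} \equiv t$, i.e. $q\tilde c^{-1} + \tilde c \equiv t$; writing $\tilde d := \tilde c^{-1}$ this reads $q\tilde d + \tilde d^{-1} \equiv t \pmod{p^r(p^r,m)}$, which in particular forces $qd + d^{-1} \equiv t \pmod{p^r}$, i.e. $D(t;p^r) = 1$. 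Conversely, if $D(t;p^r) = 1$, I need to produce such a lift; the point is that $m^2 \mid t^2 - 4q$ controls the $p$-part of $m$ relative to $\Delta = t^2 - 4q$, and one can invoke Lemma \ref{Dq+D^-1} (with the roles $B = v_p(\text{something})$, $C = r$) to guarantee that $dq + d^{-1} \equiv t \pmod{p^r}$ lifts compatibly modulo $p^r(p^r,m)$ to a solution of $\tilde c^2 - t\tilde c + q \equiv 0$. Concretely, set $\tilde d$ to be any lift of $d$ to $(\mathbb{Z}/p^r(p^r,m)\mathbb{Z})^\times$, put $\tilde c = \tilde d^{-1}$, and check that $\tilde c^2 - t \tilde c + q \equiv \tilde c(\tilde c + q\tilde c^{-1} - t) = \tilde c(\tilde d^{-1} + q\tilde d - t)$ vanishes modulo $p^r(p^r,m)$ using that $v_p(t^2 - 4q) \ge 2v_p(m) \ge v_p((p^r,m)) + r$ when combined with $p^r \mid t - qd - d^{-1}$ — this is exactly the content of Lemma \ref{Dq+D^-1}.

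The main obstacle I anticipate is the converse direction: verifying that the mod-$p^r$ congruence $D(t;p^r)=1$ actually lifts to a solution of the quadratic $\tilde c^2 - t\tilde c + q \equiv 0$ modulo the larger modulus $p^r(p^r,m)$, rather than just modulo $p^r$. This is where the hypothesis $m^2 \mid \Delta$ is essential and where one must apply Lemma \ref{Dq+D^-1} carefully, matching up the valuations: one needs $v_p(\Delta) \ge 2v_p(m)$ together with the observation that $(p^r,m) = p^{\min(r, v_p(m))}$, so that $p^r (p^r,m) = p^{r + \min(r,v_p(m))}$ divides $p^{2v_p(m) + \text{(extra)}}$ in the relevant range — then the uniqueness/well-definedness in Lemma \ref{Dq+D^-1} guarantees the lift exists and is unique, so $S(p^r,m,t,q)$ contains $d^{-1}$ exactly once. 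Once both directions are in hand, $W_{p^r,1,m}(d) = \delta_{p^r}(d^{-1} \in S(p^r,m,t,q)) = D(t;p^r)$, which is the claim.
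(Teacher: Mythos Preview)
Your reduction to the equivalence $d^{-1}\in S(p^r,m,t,q)\Leftrightarrow D(t;p^r)=1$ is correct, and the forward direction is fine. The gap is in your converse. Lemma~\ref{Dq+D^-1} cannot be applied nontrivially here: with $l=p$ and $C=r$, the hypothesis $d^2q\equiv 1\pmod{p^B}$ forces $B=0$ (since $p\mid q$ makes $d^2q\equiv 0\pmod p$), so the lemma only gives well-definedness modulo $p^r$ itself, which is vacuous. Correspondingly, the valuation inequality you write, $2v_p(m)\ge v_p((p^r,m))+r$, fails whenever $0<v_p(m)<r$, and there is no general mechanism to lift the congruence $dq+d^{-1}\equiv t\pmod{p^r}$ to modulus $p^r(p^r,m)$.

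The missing observation is that lifting is never actually needed. If $(p^r,m)>1$ then $p\mid m$, and since $m^2\mid t^2-4q$ with $p\mid q$ this forces $p\mid t$; but then any $c\in S(p^r,m,t,q)$ would satisfy $c^2\equiv c^2-tc+q\equiv 0\pmod p$, contradicting $c\in(\mathbb{Z}/p^r\mathbb{Z})^\times$. Hence $S=\emptyset$ and $W_{p^r,1,m}(d)=0$, while also $dq+d^{-1}\equiv d^{-1}\not\equiv 0\equiv t\pmod p$ gives $D(t;p^r)=0$. In the remaining case $(p^r,m)=1$ there is nothing to lift and your computation $d^{-2}-td^{-1}+q\equiv 0\pmod{p^r}\Leftrightarrow t\equiv dq+d^{-1}\pmod{p^r}$ goes through directly. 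With this fix your argument is in fact shorter than the paper's, which handles $p\mid t$ the same way but then splits the case $p\nmid t$ further according to whether $p^r\mid q$ and uses a bijectivity argument to determine $S(p^r,m,t,q)$ explicitly.
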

\begin{proof}
    By (\ref{def:W_N_M_n}), we have
    \begin{equation*}
        W_{p^r,1,m}(d) = \sum_{c \in S(p^r,m,t,q)} \delta_{p^r}(c,d^{-1}).
    \end{equation*}
    We consider the following three cases separately.
\begin{itemize}
    \item Assume that $p \mid t$. Let $c \in S(p^r,m,t,q)$. This means that a lift $\tilde{c} \in (\mathbb{Z}/p^r(p^r,m)\mathbb{Z})^\times$ of $c$ satisfies
    \begin{equation*}
        \tilde{c}^2 -t\tilde{c} + q \equiv 0 \ (\bmod{p^r(p^r,m)}).
    \end{equation*}
    Since $t \equiv q \equiv 0 \ (\bmod{p})$, we have $\tilde{c} \equiv c \equiv 0 \ (\bmod{p})$.
    This contradicts $c \in (\mathbb{Z} /p^r \mathbb{Z})^{\times}$. Thus the set $S(p^r,m,t,q)$ is empty and $W_{p^r,1,m}(d) = 0$. On the other hand, for any element $d \in (\mathbb{Z}/p^r\mathbb{Z})^{\times}$, the congruence
    \[dq  + d^{-1} \equiv t \ (\bmod{p^r})\]
    cannot hold
    because $t \equiv q \equiv 0 \ (\bmod{p})$. Hence $D(t;p^r)=0$. Therefore, when $p \mid t$, we have $W_{p^r,1,m}(d)= 0 = D(t;p^r)$. 
    \item Assume that $p \nmid t$ and $p^r \mid q$. Since $m^2 \mid (t^2 - 4q)$, we have $p \nmid m$.
    Thus we have $(p^r,m) = 1$. 
    Let $c \in S(p^r,m,t,q)$. 
    Since $(p^r,m )= 1$, the element $c$ satisfies
    \begin{equation*}
        c^2 -tc + q \equiv 0 \ (\bmod{p^r}).
    \end{equation*}
    Since $p^r \mid q$, we have $c(c-t) \equiv 0 \ (\bmod{p^r})$.
    Since $c \in (\mathbb{Z} /p^r\mathbb{Z})^{\times}$, we have  $c \equiv t \ (\bmod{p^r})$.
    Hence we have $W_{p^r,1,m}(d) = \delta_{p^r}(t,d^{-1})$. Since $p^r \mid q$, we have
    \begin{align*}
        &\delta_{p^r}(t,d^{-1}) = 1 \\
        &\iff t\equiv d^{-1} \ (\bmod{p^r})\\
        &\iff t \equiv dq + d^{-1} \ (\bmod{p^r})\\
        &\iff D(t;p^r) = 1.
    \end{align*}
    We have $\delta_{p^r}(t,d^{-1}) = D(t;p^r)$. Therefore, when $p \nmid t$ and $p^r \mid q$,  we have $W_{p^r,1,m}(d)= \delta_{p^r}(t,d^{-1}) = D(t;p^r)$. 
    \item Assume that $p \nmid t$ and $p^r \nmid q$. Since $m^2 \mid (t^2 - 4q)$, we have $p \nmid m$.
    Thus we have $(p^r,m) = 1$.  Let $c \in S(p^r,m,t,q)$. 
    Since $(p^r,m )= 1$, the element $c$ satisfies
    \begin{equation}\label{c^2-tc+q_pmod(p^r)}
        c^2 -tc + q \equiv 0 \ (\bmod{p^r}).
    \end{equation}
    Since $q$ is a power of $p$ and $p^r \nmid q$, we have $q \mid p^r$. Hence, $c$ satisfies $c(c-t) \equiv 0 \ (\bmod{q})$.
    Since $c \in (\mathbb{Z}/p^r\mathbb{Z})^{\times}$, we have $c \equiv t \ (\bmod{q})$.
    Then, we write $c \equiv t + qi \ (\bmod{p^r})$ for some $0 \leq i < p^r/q$.
    Substituting this expression into (\ref{c^2-tc+q_pmod(p^r)}), we obtain
    \begin{equation*}
        q(qi^2 + ti + 1) \equiv 0 \ (\bmod{p^r}).
    \end{equation*}
    Consider the map $f:\mathbb{Z}/(p^r/q)\mathbb{Z} \to \mathbb{Z}/(p^r/q)\mathbb{Z}$, $i \mapsto qi^2 + ti + 1$. 
    For integers $i,j$ with $0 \leq i,j < p^r/q$, since $p \nmid t$, we have
\begin{align*}   
    &qi^2 + ti + 1 \equiv qj^2 + tj + 1 \ (\bmod{p^r/q})\\
    &\iff (i-j)(q(i+j)+t) \equiv 0 \ (\bmod{p^r/q}) \\
    &\iff i \equiv j \ (\bmod{p^r/q}).
\end{align*}
    Therefore $f$ is an injective map between finite sets of same order. Hence, $f$ is bijective and we have $\#S(p^r,m,t,q) = 1$.
    There exists a unique integer $0 \leq i' < p^r/q$ satisfying $q(qi'^2 + ti' + 1) \equiv 0 \ (\bmod{p^r})$.
    We put $c' := t + qi' \in (\mathbb{Z}/p^r\mathbb{Z})$. Then $c'$ is a unique element of $S(p^r,m,t,q)$.
    Hence, we have $W_{p^r,1,m}(d) = \delta_{p^r}(c',d^{-1})$. Since $d \in (\mathbb{Z}/p^r\mathbb{Z})^\times$, we have 
    \begin{align*}
        &\delta_{p^r}(c' ,d^{-1}) = 1 \\
        &\iff c' \equiv d^{-1} \ (\bmod{p^r})\\
        &\iff d^{-2} - td^{-1} + q \equiv 0 \ (\bmod{p^r})\\
        &\iff t \equiv dq + d^{-1} \ (\bmod{p^r})\\
        &\iff D(t;p^r) = 1.
    \end{align*}
    Therefore, when $p \nmid t$ and $p^r \nmid q$, we have $W_{p^r,1,m}(d)=\delta_{p^r}(c',d^{-1}) = D(t;p^r)$.
\end{itemize}    
\end{proof}

\begin{lem}\label{C_1,p^rN,M(t,q,d)}
Let $M,N$ be positive integers with $M \mid N$ and $(N,q)=1$. Let $d \in (\mathbb{Z} /p^rN\mathbb{Z})^{\times}$ with $d^2q \equiv 1 \ (\bmod{M})$. We have
    \begin{equation}
        C_{p^rN,M}(t,q,d) = D(t;p^r) \cdot C_{N,M}(t,q,d).
    \end{equation}
\end{lem}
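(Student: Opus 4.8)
The plan is to factor out the $p$-part using the Chinese remainder theorem, reducing to the computation of $W_{p^r,1,m}(d)$ already carried out in Lemma~\ref{W_p^rN,1,m(d)}. Since $(N,q)=1$ and $M\mid N$, the prime $p$ does not divide $MN$, so $p^rMN$ is the product of the coprime integers $p^r$ and $MN$, and for any integer $m$ one has $(p^rMN,m)=(p^r,m)(MN,m)$; hence $p^rMN\cdot(p^rMN,m)$ is the product of the coprime integers $p^r(p^r,m)$ and $MN(MN,m)$. The first step is to check, via the Chinese remainder theorem, that reduction modulo $p^r$ and modulo $MN$ induces a bijection
\[S(p^rMN,m,t,q)\cong S(p^r,m,t,q)\times S(MN,m,t,q);\]
indeed, a unit $\tilde c$ modulo $p^rMN(p^rMN,m)$ satisfies $\tilde c^2-t\tilde c+q\equiv 0$ if and only if the analogous congruence holds modulo $p^r(p^r,m)$ and modulo $MN(MN,m)$ separately, and the condition $\tilde c\equiv c\pmod{p^rMN}$ splits likewise.

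The second step is to deduce the multiplicativity of $W$ from this. Writing $c\in S(p^rMN,m,t,q)$ as the pair $(c_1,c_2)$ with $c_1=c\bmod p^r$ and $c_2=c\bmod MN$, and using that $p^rN=p^r\cdot N$ with $N\mid MN$, the condition $c\equiv d^{-1}\pmod{p^rN}$ is equivalent to $c_1\equiv d^{-1}\pmod{p^r}$ and $c_2\equiv d^{-1}\pmod{N}$, so $\delta_{p^rN}(c,d^{-1})=\delta_{p^r}(c_1,d^{-1})\,\delta_N(c_2,d^{-1})$. Summing over $S(p^rMN,m,t,q)$ and splitting the sum over the product set, I obtain
\[W_{p^rN,M,m}(d)=W_{p^r,1,m}(d)\cdot W_{N,M,m}(d)=D(t;p^r)\cdot W_{N,M,m}(d),\]
the last equality being Lemma~\ref{W_p^rN,1,m(d)}.

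Finally I would match the $\psi$-factors and sum over $m$, distinguishing two cases. If $p\mid t$, then $D(t;p^r)=0$, because $p\mid q$ forces $dq+d^{-1}\equiv d^{-1}\not\equiv 0\equiv t\pmod{p}$ for every unit $d$; hence $W_{p^rN,M,m}(d)=0$ for every $m$, and both sides of the claimed identity vanish. If $p\nmid t$, then no $m$ with $m^2\mid t^2-4q$ is divisible by $p$, for otherwise $p\mid t^2-4q$ together with $p\mid 4q$ would give $p\mid t$; consequently $(p^rMN,m)=(MN,m)$, and since $\psi$ is multiplicative with $p^r$ coprime to $MN/(MN,m)$,
\[\frac{\psi(p^rMN)}{\psi\bigl(p^rMN/(p^rMN,m)\bigr)}=\frac{\psi(p^r)\,\psi(MN)}{\psi(p^r)\,\psi\bigl(MN/(MN,m)\bigr)}=\frac{\psi(MN)}{\psi\bigl(MN/(MN,m)\bigr)}.\]
Substituting this equality and the factorization of $W_{p^rN,M,m}(d)$ into the definition of $C_{p^rN,M}(t,q,d)$ and pulling $D(t;p^r)$ out of the sum over $m$ yields $D(t;p^r)\,C_{N,M}(t,q,d)$, as desired.

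The step I expect to require the most care is the Chinese remainder factorization of $S(p^rMN,m,t,q)$: one must keep precise track of the gcd $(p^rMN,m)$ as the modulus splits and verify that the defining quadratic congruence (and the choice of lift) decompose compatibly along the coprime factorization. Once that bookkeeping is in place, the remaining arguments are just multiplicativity of $\psi$, Lemma~\ref{W_p^rN,1,m(d)}, and the elementary observation that $p\mid t$ makes the entire $p$-component vanish.
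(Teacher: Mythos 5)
Your proof is correct and follows essentially the same route as the paper: a Chinese remainder theorem factorization of $S(p^rMN,m,t,q)$ giving $W_{p^rN,M,m}(d)=W_{p^r,1,m}(d)\,W_{N,M,m}(d)$, then Lemma~\ref{W_p^rN,1,m(d)} together with the case split on whether $p\mid t$ (everything vanishes) or $p\nmid t$ (no $m$ with $m^2\mid t^2-4q$ is divisible by $p$, so the $\psi$-factors cancel). The only cosmetic difference is that the paper writes $m=np^{\kappa}$ and regroups the sum as a double sum over $n$ and $\kappa$, whereas you keep $m$ intact and use that $S$ depends on $m$ only through the relevant gcd, which amounts to the same computation.
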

\begin{proof}
    Since $(N,q)=1$, each $m$ can be expressed as $m=np^{\kappa}$ where $n \geq 1$ is a positive integer coprime to $p$ and $\kappa = v_p(m)$.
     Using the multiplicativity of $\psi$, we have
    \begin{align*}
        \psi(p^rMN) &= \psi(MN) \cdot \psi(p^r), \\
        (p^rMN,m) &= (MN,n) \cdot (p^r,p^\kappa).
    \end{align*}
    Using the multiplicativity of $\delta_{p^rN}(c,d^{-1})$ and the Chinese remainder theorem, we rewrite $W_{p^rN,M,m}(d)$ as follows.
    \begin{equation*}
    \begin{split}
        W_{p^rN,M,m}(d) &= \sum_{c \in S(p^rNM,m,t,q)} \delta_{p^rN}(c,d^{-1}) \\
        &= \sum_{c \in S(p^rMN,m,t,q)} \delta_{N}(c,d^{-1}) \cdot \delta_{p^r}(c,d^{-1}) \\
        &= \sum_{c_1 \in S(MN,n,t,q)} \ \sum_{c_2 \in S(p^r,p^\kappa,t,q)} \delta_{N}(c_1,d^{-1}) \cdot \delta_{p^r}(c_2,d^{-1}) \\
        &= W_{N,M,n}(d) \cdot W_{p^r,1,p^\kappa}(d).
    \end{split}
    \end{equation*}
Therefore, we have
    \begin{equation*}
        \begin{split}
            C_{p^rN,M}(t,q,d) &= \sum_{m^2 \mid \Delta} h_w \left( \frac{\Delta}{m^2} \right) \frac{\psi(p^rMN)}{\psi(p^rMN/(p^rMN,m))} W_{p^rN,M,m}(d)\\
            &= \sum_{\substack{n^2 \mid \Delta  \\ v_p(n) = 0} } \bigg( \frac{\psi(MN)}{\psi(MN/(MN,n))} 
            W_{N,M,n}(d) \\
            &\qquad \quad \cdot \sum_{\kappa \geq 0}
            h_w \left( \frac{\Delta}{n^2 p^{2\kappa}} \right)
        \frac{\psi(p^r)}
        {\psi(p^{r- \min(\kappa,r)})}
        W_{p^r,1,p^\kappa}(d) \bigg). 
        \end{split}
    \end{equation*}
    
    When $p \mid t$, by Lemma \ref{W_p^rN,1,m(d)}, since $W_{p^r,1,p^{\kappa}}(d)=0$, we have $C_{p^rN,M}(t,q,d) = 0$.
    
    When $p \nmid t$, since $m \mid t^2-4q$, there does not exist $m$ such that $v_p(m) = \kappa > 0$. Thus, by Lemma \ref{W_p^rN,1,m(d)}, we have $W_{p^r,1,1}(d)=D(t;p^r)$ and we have
    \begin{equation*}
         C_{p^rN,M}(t,q,d) = D(t;p^r) \sum_{\substack{n^2 \mid \Delta  \\ v_p(n) = 0} } h_w \left( \frac{\Delta}{n^2} \right) \frac{\psi(MN)}{\psi(MN/(MN,n))} 
            W_{N,M,n}(d).
    \end{equation*}
    Furthermore, since $p \nmid t$, we have $p \nmid \Delta$ and we obtain the desired result.
    \begin{equation*}
    \begin{split}
        C_{p^rN,M}(t,q,d) &= D(t;p^r) \sum_{n^2 \mid \Delta} h_w \left( \frac{\Delta}{n^2} \right) \frac{\psi(MN)}{\psi(MN/(MN,n))} 
            W_{N,M,n}(d) \\
            &= D(t;p^r) C_{N,M}(t,q,d).
    \end{split}
    \end{equation*}
\end{proof}

The desired elliptic term $T^{(e)}$ is given by (\ref{T^{(e)}}) as
\begin{equation*}
    T^{(e)} = \frac{1}{2} \sum_{t^2<4q} U_{k-2}(t,q) T^{(e)}(t).
\end{equation*}
Using  Lemma \ref{W_p^rN,1,m(d)} and Lemma \ref{C_1,p^rN,M(t,q,d)}, we obtain the following result.
\begin{prop}\label{T^{(e)}(p^rN,M)}
    Let $r,M,N \geq 1$ be positive integers with $M \mid N$ and $(q,N) = 1$. Let $d \in (\mathbb{Z} /p^rN\mathbb{Z})^{\times}$ with $d^2q \equiv 1 \ (\bmod{M})$. Let $L := (d^2q-1,p^rN)$. We have
    \begin{equation*}
        T^{(e)} = \frac{\psi((p^rN)^2)}{\psi((p^rN)^2/M^2)} \sum_{\Lambda \mid (L/M)} \frac{\varphi(\Lambda^2)\varphi(p^rN/\Lambda M)}{\varphi(p^rN/M)} \sum_{t^2<4q} U_{k-2}(t,q)H_{p^rN,\Lambda M}(t,q,d).
    \end{equation*}
\end{prop}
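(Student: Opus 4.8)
The plan is to reduce the elliptic term for $\Gamma(p^rN,M)$ to the prime-to-$p$ case, i.e.\ to Kaplan and Petrow's calculation of the elliptic term carried out in the proof of Theorem \ref{trace_gamma_N,M}, by factoring out the local contribution at $p$. Recall $T^{(e)} = \frac{1}{2}\sum_{t^2<4q} U_{k-2}(t,q)\,T^{(e)}(t)$ and $T^{(e)}(t) = \frac{1}{2}\bigl(C_{p^rN,M}(t,q,d) + (-1)^k C_{p^rN,M}(t,q,-d)\bigr)$. First I would symmetrize in $t$: since $S(p^rMN,m,-t,q) = -S(p^rMN,m,t,q)$ (send a lift $\widetilde c$ to $-\widetilde c$), $\delta_{p^rN}(-c,-d^{-1}) = \delta_{p^rN}(c,d^{-1})$, and $\Delta = t^2 - 4q$ is even in $t$, one has $C_{p^rN,M}(-t,q,-d) = C_{p^rN,M}(t,q,d)$; combined with $U_{k-2}(-t,q) = (-1)^k U_{k-2}(t,q)$, replacing $t$ by $-t$ in the $(-d)$-summand gives $T^{(e)} = \frac{1}{2}\sum_{t^2<4q} U_{k-2}(t,q)\,C_{p^rN,M}(t,q,d)$. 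Then Lemma \ref{C_1,p^rN,M(t,q,d)} factors out the $p$-part:
\[
T^{(e)} = \frac{1}{2}\sum_{t^2<4q} U_{k-2}(t,q)\,D(t;p^r)\,C_{N,M}(t,q,d).
\]

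Next I would apply Kaplan and Petrow's evaluation of $C_{N,M}(t,q,d)$ in the case $(N,q)=1$, in the form it takes before the summation over $t$: the rearrangement of the sum over $m$ in the proof of \cite[Theorem 9]{Kaplan and Petrow:2017} (compare Definition \ref{H_n_1,n_2}) shows that for each $t$ with $t^2<4q$,
\[
\frac{1}{2}\,C_{N,M}(t,q,d) = \frac{\psi(N^2)}{\psi(N^2/M^2)}\sum_{\Lambda \mid (L'/M)}\frac{\varphi(\Lambda^2)\,\varphi(N/(M\Lambda))}{\varphi(N/M)}\,H_{N,\Lambda M}(t,q,d),
\]
where $L' = (d^2q-1,N)$ and $d$ is read modulo $N$. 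Substituting this into the previous display yields
\[
T^{(e)} = \frac{\psi(N^2)}{\psi(N^2/M^2)}\sum_{\Lambda \mid (L'/M)}\frac{\varphi(\Lambda^2)\,\varphi(N/(M\Lambda))}{\varphi(N/M)}\sum_{t^2<4q} U_{k-2}(t,q)\,D(t;p^r)\,H_{N,\Lambda M}(t,q,d).
\]

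It remains to recognize this as the asserted formula. Since $r\geq 1$ and $q$ is a power of $p$, we have $p\mid q$, so $d^2q - 1 \equiv -1 \pmod p$ and hence $L := (d^2q-1,p^rN) = (d^2q-1,N) = L'$; in particular every $\Lambda$ dividing $L/M$ is coprime to $p$. For such $\Lambda$, using also $(p,NM)=1$, the multiplicativity of $\psi$ and $\varphi$ gives
\[
\frac{\psi((p^rN)^2)}{\psi((p^rN)^2/M^2)} = \frac{\psi(N^2)}{\psi(N^2/M^2)}, \qquad \frac{\varphi(\Lambda^2)\,\varphi(p^rN/(M\Lambda))}{\varphi(p^rN/M)} = \frac{\varphi(\Lambda^2)\,\varphi(N/(M\Lambda))}{\varphi(N/M)},
\]
and Lemma \ref{H_p^rn_1,n_2(t,q,1)} gives $H_{p^rN,\Lambda M}(t,q,d) = D(t;p^r)\,H_{N,\Lambda M}(t,q,d)$. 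Substituting these three identities into the last display converts it into precisely the claimed formula for $T^{(e)}$.

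I expect the second step to be the main obstacle: one must extract from \cite{Kaplan and Petrow:2017} the elliptic-term identity at the level of a single $t$ (Theorem \ref{trace_gamma_N,M} records only the sum over $t$) and, at the same time, keep track of exactly which function $D(t;\cdot)$ — with argument $d$ or $-d$, evaluated at $t$ or at $-t$ — appears at each stage, so that the $d \leftrightarrow -d$ symmetrization forced by the condition $\chi(-1)=(-1)^k$ is correctly reconciled with the factorization of Lemma \ref{C_1,p^rN,M(t,q,d)} before $D(t;p^r)$ is pulled out of the sum. Once the pointwise identity is available, the passage from level $p^rN$ to level $N$ is formal, depending only on the multiplicativity of $\psi$ and $\varphi$ and on Lemmas \ref{C_1,p^rN,M(t,q,d)} and \ref{H_p^rn_1,n_2(t,q,1)}.
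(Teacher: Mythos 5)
Your proposal is correct and follows essentially the same route as the paper: the paper likewise combines the pointwise identity $C_{N,M}(t,q,d) = 2\frac{\psi(N^2)}{\psi(N^2/M^2)}\sum_{\Lambda\mid(L/M)}\frac{\varphi(\Lambda^2)\varphi(N/(M\Lambda))}{\varphi(N/M)}H_{N,\Lambda M}(t,q,d)$ (cited directly as Equation (98) of Kaplan--Petrow, so the step you flag as the main obstacle is already available there) with Lemma \ref{C_1,p^rN,M(t,q,d)}, Lemma \ref{H_p^rn_1,n_2(t,q,1)}, multiplicativity, and $L=(d^2q-1,N)$. The only difference is cosmetic: you perform the $d\leftrightarrow -d$, $t\leftrightarrow -t$ symmetrization at the level of $C_{p^rN,M}$ before substituting, while the paper does it at the end via $H_{p^rN,\Lambda M}(t,q,-d)=H_{p^rN,\Lambda M}(-t,q,d)$ and the parity of $U_{k-2}$.
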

\begin{proof}
    Since $(d^2q-1,p^r)=1$, we have $L=(d^2q-1,N)$. By \cite[Equation (98)]{Kaplan and Petrow:2017}, we have
        \begin{equation*}
        C_{N,M}(t,q,d) = 2 \frac{\psi(N^2)}{\psi(N^2/M^2)} \sum_{\Lambda \mid (L/M)} \frac{\varphi(\Lambda^2) \varphi(N/(M \Lambda))}{\varphi(N/M)} H_{N,\Lambda M}(t,q,d).
    \end{equation*}
    By Lemma \ref{C_1,p^rN,M(t,q,d)} and the multiplicativity of $\varphi$ and $\psi$, we have
    \begin{equation*}
        C_{p^rN,M}(t,q,d) = 2D(t;p^r) \frac{\psi((p^rN)^2)}{\psi((p^rN)^2/M^2)} \sum_{\Lambda \mid (L/M)} \frac{\varphi(\Lambda^2) \varphi(p^rN/(M \Lambda))}{\varphi(p^rN/M)} H_{N,\Lambda M}(t,q,d).
    \end{equation*}
    Thus, by Lemma \ref{H_p^rn_1,n_2(t,q,1)}, we rewrite the elliptic term $T^{(e)}$ as follows.
    \begin{equation}\label{T^(e)_p^rN,M}
    \begin{split}
        T^{(e)} &= \frac{\psi((p^rN)^2)}{\psi((p^rN)^2/M^2)} \sum_{\Lambda \mid (L/M)} \frac{\varphi(\Lambda^2)\varphi(p^rN/\Lambda M)}{\varphi(p^rN/M)} \\
        &\quad \cdot \frac{1}{2}\sum_{t^2<4q} U_{k-2}(t,q)\left( H_{p^rN,\Lambda M}(t,q,d) +(-1)^k H_{p^rN,\Lambda M}(t,q,-d) \right).
    \end{split}
    \end{equation}
    The parity of the function $U_{k-2}(t,q)$ as a function of $t$ matches the parity of $k$. Moreover, by Definition \ref{H_n_1,n_2}, we have $H_{p^rN,\Lambda M}(t,q,-d) = H_{p^rN,\Lambda M}(-t,q,d)$. Therefore the following equation holds:
\begin{equation*}
    U_{k-2}(t,q) H_{p^rN,\Lambda M}(t,q,-d) = (-1)^{k-2} U_{k-2}(-t,q) H_{p^rN,\Lambda M}(-t,q,d).
\end{equation*}
The following transformation can be made:
\begin{equation*}
    \begin{split}
        &\sum_{t^2<4q} U_{k-2}(t,q)\left(H_{p^rN,\Lambda M}(t,q,d) +(-1)^k H_{p^rN,\Lambda M}(t,q,-d) \right) \\
        &= \sum_{t^2 < 4q} \left( U_{k-2}(t,q) H_{p^rN,\Lambda M}(t,q,d) + U_{k-2}(-t,q)H_{p^rN,\Lambda M}(-t,q,d) \right) \\
        &= 2 \sum_{t^2 < 4q} U_{k-2}(t,q)H_{p^rN,\Lambda M}(t,q,d).
    \end{split}
\end{equation*}
\end{proof}
Thus, the proof of Theorem \ref{trace_gamma_p^rN,M} is complete.

\section{Elliptic curves over a finite field with a specified subgroup}\label{secEC}

In this section, we first summarize the results of Kaplan and Petrow \cite[Section 3]{Kaplan and Petrow:2017}, and then calculate the moment $\mathbb{E}_q(U_{k-2}(t_E,q)\Phi_A)$ in the case where $p \mid \#A$.

    Let $A$ be a finite abelian group, and $t \in \mathbb{Z}$.
    Recall that $\mathcal{C}$ denotes the set of $\mathbb{F}_q$--isomorphism classes of elliptic curves over $\mathbb{F}_q$
    We define the subset of $\mathcal{C}$ as follows.
    \begin{equation*}
            \mathcal{C}(A,t) :=\{E/\mathbb{F}_q :\text{there exists an injective homomorphism } A \hookrightarrow E(\mathbb{F}_q) \ \text{and }t_E = t\}.
    \end{equation*}
By Hasse's Theorem, we have $t_E^2 \leq 4q$. We rewrite $\mathbb{E}_q(U_{k-2}(t_E,q)\Phi_A))$ as follows.
\begin{equation}\label{E_q_A}
    \begin{split}
        \mathbb{E}_q(U_{k-2}(t_E,q)\Phi_A) &= \frac{1}{q}\sum_{\substack{E/\mathbb{F}_q \\ A \hookrightarrow E(\mathbb{F}_q)}} \frac{U_{k-2}(t_E,q)}{\# \mathrm{Aut}_{\mathbb{F}_q}(E)} \\
        &= \sum_{t^2 \leq 4q} U_{k-2}(t_E,q) \left( \frac{1}{q} \sum_{\substack{E/\mathbb{F}_q\\ t_E = t \\ A \hookrightarrow E(\mathbb{F}_q)}} \frac{1}{\# \mathrm{Aut}_{\mathbb{F}_q}(E)} \right) \\
        &= \sum_{t^2 \leq 4q}U_{k-2}(t,q) \mathbb{P}_q (\mathcal{C}(A,t)).
    \end{split}
\end{equation}
Here we define $\mathbb{P}_q (\mathcal{C}(A,t))$ by 
\begin{equation*}
    \mathbb{P}_q (\mathcal{C}(A,t)) :=
 \frac{1}{q} \sum_{\substack{E/\mathbb{F}_q\\ t_E = t \\ A \hookrightarrow E(\mathbb{F}_q)}} \frac{1}{\# \mathrm{Aut}_{\mathbb{F}_q}(E)}. \\
\end{equation*}

Deuring \cite{Deuring:1941}, Waterhouse \cite{Waterhouse:1969}, Lenstra \cite{Lenstra:1977}, and Schoof \cite{Schoof:1987} obtained results expressing the number of isomorphism classes of elliptic curves in terms of class numbers, and Kaplan and Petrow used Schoof's result to obtain the following conclusion.

\begin{thm}[Kaplan--Petrow {\cite[Theorem 7]{Kaplan and Petrow:2017}}]\label{P_q_ordinary}
    Let $A$ be a finite abelian group such that $A \cong \mathbb{Z} / n_1\mathbb{Z} \times \mathbb{Z} / n_2\mathbb{Z}$ where $n_1,n_2 \geq 1$ are positive integers satisfying $n_2 \mid n_1$. Then, we have
    \begin{equation*}
        \mathbb{P}_q(\mathcal{C}(A,t)) =
        \begin{cases}
            \displaystyle \frac{1}{q}H_{n_1,n_2}(t,q,1) & (\text{if }p \nmid t \ \text{and } t^2 < 4q)\\*[3mm]
            0 & (\text{otherwise}). 
        \end{cases}
    \end{equation*}
\end{thm}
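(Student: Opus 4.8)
This is \cite[Theorem 7]{Kaplan and Petrow:2017}; I sketch how I would reprove it. The plan is to translate the weighted point count $\mathbb{P}_q(\mathcal{C}(A,t))$ into a count of \emph{ordinary} elliptic curves over $\mathbb{F}_q$ with prescribed Frobenius trace and prescribed rational torsion, and then to evaluate it via the class-number descriptions of Deuring \cite{Deuring:1941}, Waterhouse \cite{Waterhouse:1969}, Lenstra \cite{Lenstra:1977}, and Schoof \cite{Schoof:1987}. First I would dispose of the ``$0$ otherwise'' case. By Hasse $t^2 \le 4q$, and if $p \mid t$ (which includes the boundary case $t^2 = 4q$) then every $E/\mathbb{F}_q$ with $t_E = t$ is supersingular, so $E(\mathbb{F}_q)[p^\infty] = 0$; in the setting in which this statement is used in the present paper, namely $p \mid \#A$ --- so that the summand $\mathbb{Z}/n_1\mathbb{Z}$ of $A$ requires a rational point of order divisible by $p$ --- this forces $\mathcal{C}(A,t) = \emptyset$. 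So from now on assume $p \nmid t$ and $t^2 < 4q$; then every such $E$ is ordinary, $\mathrm{End}_{\mathbb{F}_q}(E)$ is an order $\mathcal{O}$ with $\mathbb{Z}[\pi] \subseteq \mathcal{O} \subseteq \mathcal{O}_K$ in $K = \mathbb{Q}(\sqrt{t^2-4q})$, where $\pi$ denotes the Frobenius endomorphism ($\pi\bar\pi = q$, $\pi+\bar\pi = t$), and $E(\mathbb{F}_q) = \ker(\pi - 1) \cong \mathcal{O}/(\pi-1)\mathcal{O}$ as an abelian group by Deuring's theory.

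Next I would reformulate the condition $A \hookrightarrow E(\mathbb{F}_q)$ prime by prime. Writing $E(\mathbb{F}_q) \cong \mathbb{Z}/a\mathbb{Z} \times \mathbb{Z}/b\mathbb{Z}$ with $b \mid a$ (so that $b \mid q-1$ by the Weil pairing), an injection $\mathbb{Z}/n_1\mathbb{Z} \times \mathbb{Z}/n_2\mathbb{Z} \hookrightarrow E(\mathbb{F}_q)$ exists if and only if $n_1 \mid a$ and $n_2 \mid b$; equivalently, at every prime $\ell$ the curve $E$ has full $\ell^{v_\ell(n_2)}$-torsion rational together with a rational point of order $\ell^{v_\ell(n_1)}$. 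Having full $\ell^{j}$-torsion rational forces $\ell^{j} \mid q-1$ (Weil pairing) and $\ell^{2j} \mid q+1-t = \#E(\mathbb{F}_q)$; specializing to $d = 1$, these are exactly the divisibilities recorded by the factors $\delta_{n_2}(q,1)$ and $D(t;n_1n_2)$ --- and, after the inclusion--exclusion below, by the functions $D_{n_1,\mu}(t)$ of Definition \ref{D_nu_mu(t)} --- in the formula for $H_{n_1,n_2}(t,q,1)$ of Definition \ref{H_n_1,n_2}.

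It remains to supply the class-number count and to carry out the bookkeeping. Schoof's theorem gives that, for $N$ coprime to $p$, the number of $E/\mathbb{F}_q$ with $t_E = t$ and $E[N] \subseteq E(\mathbb{F}_q)$, weighted by $1/\#\mathrm{Aut}_{\mathbb{F}_q}(E)$, equals $\tfrac12$ times the Hurwitz--Kronecker class number $H\!\left(\tfrac{t^2-4q}{N^2}\right)$ when $N \mid q-1$ and $N^2 \mid q+1-t$, and $0$ otherwise; this is where Deuring's correspondence between such curves and ideal classes of $\mathcal{O}$, and the weights $h_w$, enter. To pass from ``$E(\mathbb{F}_q) \supseteq (\mathbb{Z}/N\mathbb{Z})^2$'' to the \emph{exact} condition ``$A \hookrightarrow E(\mathbb{F}_q)$'' --- that is, to force the cyclic part to be divisible by $n_1$ while retaining only the correct full-torsion exponents --- I would run an inclusion--exclusion over the squarefree full divisors $m \mid\mid n_1$, with the auxiliary parameter $\mu \prec m$ recording at each prime how far beyond $n_2$ the rational full torsion extends, and the Liouville values $\lambda(m)$ furnishing the signs. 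Assembling the contributions and matching them with the functions $D_{n_1,\mu}(t)$ reproduces term by term the formula for $H_{n_1,n_2}(t,q,1)$ in Definition \ref{H_n_1,n_2}, whence $\mathbb{P}_q(\mathcal{C}(A,t)) = \frac{1}{q}H_{n_1,n_2}(t,q,1)$. The main obstacle is precisely this last step: converting the inequalities ``$n_1 \mid a$, $n_2 \mid b$'' on the two invariant factors of $\mathcal{O}/(\pi-1)\mathcal{O}$ into a \emph{signed} sum of full-torsion counts with no double counting, and checking that the indicator functions $D(t;\cdot)$ and $D_{\nu,\mu}(t)$ encode the local Weil-pairing and $\#E(\mathbb{F}_q)$-divisibility conditions exactly --- the delicate point that the present paper itself has to revisit, as noted in the Remark following Definition \ref{D_nu_mu(t)}.
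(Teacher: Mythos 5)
The paper supplies no proof of this statement---it is imported verbatim from Kaplan--Petrow, Theorem 7---and your sketch retraces exactly the argument of that source (and the route the paper itself alludes to via Deuring, Waterhouse, Lenstra, Schoof): the description $E(\mathbb{F}_q)\cong \mathcal{O}/(\pi-1)\mathcal{O}$ for ordinary curves, Schoof's weighted count $\tfrac12 H\bigl((t^2-4q)/N^2\bigr)$ for full $N$-torsion under $N\mid q-1$, $N^2\mid q+1-t$, and the signed sum over full divisors $m\mid\mid n_1$ and $\mu\prec m$ with the indicators $D(t;\cdot)$, $D_{n_1,\mu}(t)$ doing the local bookkeeping. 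Your handling of the ``$0$ otherwise'' clause (reducing it to $p\mid t$, hence supersingularity, hence no rational $p$-torsion, under the hypothesis $p\mid n_1$ that is in force whenever the paper invokes that clause) matches how the theorem is used in Section 6, so the proposal is correct and essentially the same approach; the combinatorial verification you defer at the end is exactly what the cited Theorem 7 carries out.
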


As an application of Theorem \ref{P_q_ordinary}, we prove an explicit formula for $\mathbb{E}_q(\mathcal{C}(A,t))$ when the order $\#A$ is divisible by $p$.
In this section, we take $d = 1$ in the definition of $D(t;n)$.
Let $A$ be a finite abelian group such that $A \cong \mathbb{Z} / p^rn_1\mathbb{Z} \times \mathbb{Z} / n_2\mathbb{Z}$ where $n_1,n_2 \geq 1$ are positive integers satisfying $n_2 \mid n_1$ and $r \geq 1$. 
We consider two cases separately.
\begin{itemize}
    \item Assume that $p \nmid t$. 
By Theorem \ref{P_q_ordinary}, we have
\begin{equation*}
    \mathbb{P}_q(\mathcal{C}(A,t)) = \frac{1}{q} H_{p^rn_1,n_2}(t,q,1).
\end{equation*}
    \item Assume that $p \mid t$. If  an elliptic curve $E / \mathbb{F}_q$ has an injective homomorphism $A \hookrightarrow E(\mathbb{F}_q)$, we have $p \mid \#E(\mathbb{F}_q)$. This induces $p$ does not divide $t_E = q+1 - \#E(\mathbb{F}_q)$. Thus we have
\begin{equation*}
    \mathbb{P}_q(\mathcal{C}(A,t)) = 0.
\end{equation*}
\end{itemize}

From the above results, we have the following lemma.
\begin{lem}\label{E_q(p^rn_1,n_2)}
Let $A \cong \mathbb{Z}/p^rn_1\mathbb{Z} \times \mathbb{Z} / n_2 \mathbb{Z}$ be a finite abelian group. Assume that $r \geq 1$ and $n_1,n_2$ are positive integers satisfying $n_2 \mid n_1$ and $(q,n_1) = 1$. We have
    \begin{equation*}
    \mathbb{E}_q(U_{k-2}(t_E,q)\Phi_{A}) = \frac{1}{q} \sum_{t^2<4q} U_{k-2}(t,q)H_{p^rn_1,n_2}(t,q,1).
    \end{equation*}
\end{lem}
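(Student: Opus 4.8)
The plan is to start from the decomposition (\ref{E_q_A}) and to compare the two sides term by term in $t$. By (\ref{E_q_A}) we have
\begin{equation*}
    \mathbb{E}_q(U_{k-2}(t_E,q)\Phi_A) = \sum_{t^2 \leq 4q} U_{k-2}(t,q)\,\mathbb{P}_q(\mathcal{C}(A,t)),
\end{equation*}
so it suffices to prove that $U_{k-2}(t,q)\,\mathbb{P}_q(\mathcal{C}(A,t)) = \frac{1}{q} U_{k-2}(t,q)\,H_{p^rn_1,n_2}(t,q,1)$ for every integer $t$ with $t^2 < 4q$, and that the terms with $t^2 = 4q$ contribute nothing on the left. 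Since $n_2 \mid n_1 \mid p^rn_1$, Theorem \ref{P_q_ordinary} applies to $A \cong \mathbb{Z}/p^rn_1\mathbb{Z} \times \mathbb{Z}/n_2\mathbb{Z}$: when $p \nmid t$ and $t^2 < 4q$ it gives exactly $\mathbb{P}_q(\mathcal{C}(A,t)) = \frac{1}{q} H_{p^rn_1,n_2}(t,q,1)$, and for all other $t$ it gives $\mathbb{P}_q(\mathcal{C}(A,t)) = 0$; in particular this covers $t^2 = 4q$, since then $p \mid t^2 = 4q$ forces $p \mid t$.

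The remaining point is to show that $H_{p^rn_1,n_2}(t,q,1) = 0$ whenever $p \mid t$ and $t^2 < 4q$, so that the two summands still agree there. For this I would apply Lemma \ref{H_p^rn_1,n_2(t,q,1)} (whose hypotheses $(q,n_1)=1$ and $r \geq 1$ are assumed), which gives $H_{p^rn_1,n_2}(t,q,1) = D(t;p^r)\,H_{n_1,n_2}(t,q,1)$. With $d = 1$ the function $D(t;p^r)$ equals $\delta_{p^r}(q+1,t)$, and since $p \mid q$ this can be $1$ only if $t \equiv 1 \pmod{p}$; hence $D(t;p^r) = 0$ when $p \mid t$, so $H_{p^rn_1,n_2}(t,q,1) = 0$ there. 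This is the arithmetic shadow of the group-theoretic fact that $\mathbb{P}_q(\mathcal{C}(A,t))$ vanishes for $p \mid t$: an injection $A \hookrightarrow E(\mathbb{F}_q)$ forces $p \mid \#A \mid \#E(\mathbb{F}_q)$, hence $t_E = q+1-\#E(\mathbb{F}_q) \equiv 1 \pmod{p}$, so no $E$ with $t_E = t$ exists.

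Putting these together, the sum over $t^2 \leq 4q$ collapses to the sum over $t^2 < 4q$, on which $U_{k-2}(t,q)\,\mathbb{P}_q(\mathcal{C}(A,t)) = \frac{1}{q} U_{k-2}(t,q)\,H_{p^rn_1,n_2}(t,q,1)$ holds for all $t$ --- by Theorem \ref{P_q_ordinary} when $p \nmid t$, and because both sides are $0$ when $p \mid t$ --- so summing over $t$ gives the asserted identity. (In the degenerate case $q \not\equiv 1 \pmod{n_2}$ the right-hand side is interpreted as $0$, and this is consistent: by the Weil pairing no such $E$ exists, and $\delta_{n_2}(q,1)=0$ annihilates every term of $H_{p^rn_1,n_2}(t,q,1)$.) I do not expect a genuine obstacle here; the only step that needs care is this matching of supports --- checking that $H_{p^rn_1,n_2}(t,q,1)$ vanishes exactly on the set where $\mathbb{P}_q(\mathcal{C}(A,t)) = 0$ --- which is furnished by Lemma \ref{H_p^rn_1,n_2(t,q,1)} together with the explicit form of $D(t;p^r)$ for $d = 1$.
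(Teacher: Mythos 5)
Your proposal is correct and follows essentially the same route as the paper: decompose the moment via (\ref{E_q_A}), apply Theorem~\ref{P_q_ordinary} to $A\cong\mathbb{Z}/p^rn_1\mathbb{Z}\times\mathbb{Z}/n_2\mathbb{Z}$ for $p\nmid t$, and observe that an injection $A\hookrightarrow E(\mathbb{F}_q)$ forces $p\nmid t_E$, so the terms with $p\mid t$ (including $t^2=4q$) vanish. Your explicit check via Lemma~\ref{H_p^rn_1,n_2(t,q,1)} that $D(t;p^r)=\delta_{p^r}(q+1,t)$ kills $H_{p^rn_1,n_2}(t,q,1)$ when $p\mid t$ is a point the paper leaves implicit, and it is a welcome clarification rather than a deviation.
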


\section{Proof of Theorem \ref{MainTheorem}}
In this section, we prove Theorem \ref{MainTheorem} by applying the Eichler--Selberg trace formulas for $\Gamma(p^rn_1,\lambda)$ for several $\lambda$.

In Section \ref{secMF}, we showed that a part of the trace $\mathrm{Tr}(T_q \langle d \rangle \mid S_k (n_1,n_2))$ can be expressed as a sum of class numbers. Focusing on the part corresponding to the sum of class numbers and organizing it, by Proposition \ref{T^{(e)}(p^rN,M)}, we have
\begin{equation*}
    T_{p^r n_1, \lambda}(q,d) = \sum_{\Lambda \mid (L / \lambda)} \varphi(\Lambda^2) \varphi\left( \frac{p^r n_1}{\lambda \Lambda} \right) \sum_{t^2 < 4q} U_{k-2}(t,q,d) H_{p^r n_1, \lambda \Lambda}(t,q,d).
\end{equation*}

Since the function $\phi(n)$ is the inverse function of the function $\varphi(n^2)$ with respect to the Dirichlet convolution, the following relation holds.
\begin{lem}\label{sumT_p^rn_1,n_2}
    Let $r,n_1,n_2 \geq 1$ be positive integers with $n_2 \mid n_1$ and $(q,n_1)=1$. Let $d \in (\mathbb{Z}/p^rn_1\mathbb{Z})^\times$ with $d^2 q \equiv 1 \ (\bmod{n_2})$. Let $L = (d^2 q - 1, p^r n_1)$. We have
    \begin{equation}\label{sum_Tr}
    \frac{1}{\varphi(p^r n_1 / n_2)} \sum_{\nu \mid (L / n_2)} \phi(\nu) T_{p^r n_1, n_2 \nu}(q,d) = \sum_{t^2 < 4q} U_{k-2}(t,q) H_{p^r n_1, n_2}(t,q,d).
    \end{equation}
\end{lem}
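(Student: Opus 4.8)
The plan is to expand $T_{p^rn_1,n_2\nu}(q,d)$ using its explicit formula and then perform a Möbius-type inversion over the divisor lattice of $L/n_2$. First I would substitute the defining expression
\[
T_{p^rn_1,n_2\nu}(q,d) = \sum_{\Lambda \mid (L/(n_2\nu))} \varphi(\Lambda^2)\,\varphi\!\left(\frac{p^rn_1}{n_2\nu\Lambda}\right) \sum_{t^2<4q} U_{k-2}(t,q)\,H_{p^rn_1,n_2\nu\Lambda}(t,q,d)
\]
into the left-hand side of (6.1). Writing $\Lambda' = \nu\Lambda$, the double sum over $\nu \mid (L/n_2)$ and $\Lambda \mid (L/(n_2\nu))$ becomes a sum over $\Lambda' \mid (L/n_2)$ together with $\nu \mid \Lambda'$. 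Exchanging the order of summation, the left-hand side becomes
\[
\frac{1}{\varphi(p^rn_1/n_2)} \sum_{\Lambda' \mid (L/n_2)} \varphi\!\left(\frac{p^rn_1}{n_2\Lambda'}\right) \left( \sum_{\nu \mid \Lambda'} \phi(\nu)\,\varphi\!\left(\frac{\Lambda'^2}{\nu^2}\right) \cdot \frac{\varphi((\Lambda'/\nu)^2)\,\varphi(p^rn_1/(n_2\nu\Lambda/\ldots))}{\ldots} \right) \sum_{t^2<4q} U_{k-2}(t,q)\,H_{p^rn_1,n_2\Lambda'}(t,q,d),
\]
where the inner coefficient must be reorganized carefully; the key point is that the $\nu$-sum should collapse the $\varphi(\Lambda^2)$ factors so that only the term $\Lambda'=1$ survives. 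Concretely, using multiplicativity I would reduce to the prime-power case and invoke the stated Dirichlet-convolution identity $\sum_{d\mid n}\varphi(d^2)\phi(n/d) = \delta(n,1)$ from Section 2, which is exactly the orthogonality needed to kill all terms with $\Lambda' \neq 1$.

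The main technical obstacle is matching the $\varphi$-factors correctly: in $T_{p^rn_1,n_2\nu}$ the Euler-$\varphi$ weights involve $p^rn_1/(n_2\nu\Lambda)$, and after the substitution $\Lambda' = \nu\Lambda$ one needs $\varphi(p^rn_1/(n_2\Lambda')) = \varphi(p^rn_1/(n_2\nu\Lambda))$, which is automatic, but the factor $\varphi(\Lambda^2) = \varphi((\Lambda'/\nu)^2)$ combined with $\phi(\nu)$ does \emph{not} immediately telescope — one must check that $\sum_{\nu\mid\Lambda'}\phi(\nu)\varphi((\Lambda'/\nu)^2) = \delta(\Lambda',1)$, which is precisely the cited convolution identity applied to $n=\Lambda'$. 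Once that is in place, all terms with $\Lambda' \geq 2$ vanish, leaving only $\Lambda' = 1$, for which the coefficient is $\varphi(p^rn_1/n_2)/\varphi(p^rn_1/n_2) = 1$ and $H_{p^rn_1,n_2\cdot 1}(t,q,d) = H_{p^rn_1,n_2}(t,q,d)$, yielding the right-hand side of (6.1).

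I would therefore organize the proof as: (i) substitute and reindex via $\Lambda' = \nu\Lambda$; (ii) interchange the order of summation to isolate, for each fixed $\Lambda'$, the inner sum $\sum_{\nu\mid\Lambda'}\phi(\nu)\varphi((\Lambda'/\nu)^2)$; (iii) apply the convolution identity $\sum_{d\mid n}\varphi(d^2)\phi(n/d)=\delta(n,1)$ to conclude this inner sum equals $\delta(\Lambda',1)$; (iv) extract the surviving term $\Lambda'=1$ and simplify the leading constant to $1$. The only place where care beyond bookkeeping is needed is step (ii)–(iii), ensuring that the range $\Lambda'\mid (L/n_2)$ and the divisibility constraints $\nu\mid (L/n_2)$, $\Lambda\mid(L/(n_2\nu))$ are genuinely equivalent to $\Lambda'\mid(L/n_2)$ with $\nu\mid\Lambda'$ — this is true because divisibility by $L/n_2$ is inherited by divisors, so no extra terms appear or disappear.
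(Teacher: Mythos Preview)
Your proposal is correct and follows essentially the same route as the paper: substitute the explicit formula for $T_{p^rn_1,n_2\nu}(q,d)$, reindex via $\Lambda' = \nu\Lambda$ (the paper calls this $\mu$), interchange summations, and collapse the inner sum using the convolution identity $\sum_{d\mid n}\varphi(d^2)\phi(n/d)=\delta(n,1)$ so that only the term $\Lambda'=1$ survives. Your intermediate display with the ellipses is garbled, but the clean version you state immediately afterward is exactly right and matches the paper's argument.
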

\begin{proof}
By Proposition \ref{T^{(e)}(p^rN,M)} the left hand side is equal to
\begin{equation*}
    \frac{1}{\varphi(p^rn_1/n_2)} \sum_{\nu \mid (L/n_2)} \phi(\nu) \sum_{\Lambda \mid (L/n_2 \nu)} \varphi(\Lambda^2) \varphi \left( \frac{p^rn_1}{n_2 \nu \Lambda} \right) \sum_{t^2<4q} U_{k-2}(t,q) H_{p^rn_1,n_2 \nu \Lambda}(t,q,d).
\end{equation*}
Since $\nu \Lambda \mid (L/n_2)$, by setting $\mu := \nu \Lambda$ and changing the order of summation, we obtain the following:
\begin{equation*}
    \frac{1}{\varphi(p^rn_1/n_2)} \sum_{\mu \mid (L/n_2)} \varphi \left( \frac{p^rn_1}{n_2 \mu} \right) \sum_{t^2<4q} U_{k-2}(t,q,d) H_{p^rn_1,n_2 \mu}(t,q,d) \sum_{\nu \Lambda = \mu} \phi(\nu) \varphi(\Lambda^2).
\end{equation*}

Since the function $\phi(n)$ is the inverse function of the function $\varphi(n^2)$ with respect to Dirichlet convolution, 
we have
\[\sum_{\nu \Lambda = \mu} \phi(\nu) \varphi(\Lambda^2) =
\begin{cases}
     1 & (\text{if } \mu = 1)\\
     0 & (\text{otherwise}).
\end{cases}\]
Therefore, only the part where $\mu = 1$ remains, and we obtain the following result:
\begin{equation*}
\begin{split}
    \frac{1}{\varphi(p^rn_1/n_2)} \varphi \left( \frac{p^rn_1}{n_2} \right) \sum_{t^2 < 4q}U_{k-2}(t,q) H_{p^rn_1,n_2}(t,q,d) \\= \sum_{t^2 < 4q}U_{k-2}(t,q)H_{p^rn_1,n_2}(t,q,d).
\end{split}
\end{equation*}
\end{proof}

By Lemma \ref{E_q(p^rn_1,n_2)} and Lemma \ref{sumT_p^rn_1,n_2}, the proof of Theorem \ref{MainTheorem} is complete.

\section*{Acknowledgements}
The author would like to thank Nathan Kaplan and Ian Petrow for invaluable comments and suggestions on an earlier draft of this paper.
The author would like to thank his advisor, Tetsushi  Ito, for useful discussions and warm encouragements.

\section*{Declarations}

\noindent
\textbf{Author contributions} T.K.\ is the sole author and completed all work related to this manuscript.

\vspace*{2mm}

\noindent
\textbf{Data availability} No datasets were generated or analysed during the current 
study.

\vspace*{2mm}

\noindent
\textbf{Conflict of interest}
The authors declare no conflict of interest.

\end{document}